\documentclass[11pt]{amsart}
\usepackage{oldgerm}
\usepackage{amssymb} % \mathfrak, \mathbf
\usepackage{mathrsfs} % \mathscr
\usepackage{amsmath}
\usepackage{latexsym}
\usepackage[all]{xy}
\usepackage[dvips]{graphics}
\usepackage{amsthm}
\usepackage{enumerate}

\allowdisplaybreaks

%%%%%%%%%% Theorems [Start] %%%%%%%%%%
%
%
\theoremstyle{plain} %% context in italic
\newtheorem{thm}{Theorem}[section] % Theorem 1.1, 1.2, 1.3, etc.
\newtheorem*{thm*}{Theorem}
\newtheorem{prop}[thm]{Proposition} % Proposition 1.1, 1.2, 1.3, etc.
\newtheorem{lem}[thm]{Lemma} % Lemma 1.1, 1.2, 1.3, etc.
\newtheorem{cor}[thm]{Corollary} % Corollary 1.1, 1.2, 1.3, etc.
\newtheorem{conj}[thm]{Conjecture}%[section] % Conjecture 1.1, 1.2, 1.3, etc.
\theoremstyle{definition} %% context in roman
\newtheorem{defn}[thm]{Definition} % Definition 1.1, 1.2, 1.3, etc.
%
 % Example 1.1, 1.2, 1.3, etc.
%
 % Problem 1.1, 1.2, 1.3, etc.
%
\newtheorem{rem}[thm]{Remark} % Remark 1.1, 1.2, 1.3, etc.
%
 % Fact 1.1, 1.2, 1.3, etc.
%
\newtheorem*{acknow}{Acknowledgements} % Acknowledgements
\theoremstyle{remark} %% context in roman
\newtheorem*{notation}{Notation} % Notation
%
%%%%%%%%%% Theorems [End] %%%%%%%%%%

\title[the trace of modular forms and its application
%A reciprocity law for Hecke eigenforms and elliptic curves
]
{the trace of modular forms\\
and its application to number theory
%A reciprocity law for the Hecke fields and the fields generated by torsion points of elliptic curves 
}

\author
%[N. Ojiro]
{Norifumi Ojiro}
\address{Department of Mathematics, 
Graduate School of Science, 
Hiroshima University \\
1-3-1, Kagamiyama, Higashi-Hiroshima, Hiroshima, 739-8526, Japan}
\email{d153696@hiroshima-u.ac.jp}

\date{5 April 2016}

\if0
\dedicatory{
}
\fi

\subjclass[2010]{11F11 (Primary); 11F30 (Secondary)}
\keywords{trace, strong multiplicity one, the transformation polynomial of modular forms, the specialization by elliptic curves}

\begin{document}
\maketitle

\begin{abstract}
We provide a generalization of an algebraic linear combination for the trace of certain elliptic modular forms, and through specializing the expression at a suitable pair consisting of an elliptic curve over algebraic number fields and its a certain cyclic subgroup with finite order, show a formula
between distinct algebraic number fields, the one related to modular forms and the other related to elliptic curves. 
\end{abstract}

\section{Introduction}
It is classically known that elliptic modular functions play interesting roles in number theory such as some important results for algebraic number fields are explicitly described by using these special values. Similarly for elliptic modular forms, its an application to number theory is mentioned as below.

First let us explain necessary terms in brief. Every element $\alpha$ of a subgroup ${\rm GL}_2^+(\mathbf R)$ of 
${\rm GL}_2(\mathbf R)$ consisting of all matrices with positive determinant, acts on a meromorphic function $h$ on the complex upper half plane $\mathfrak H$ as follows:
\begin{eqnarray*}
h(z)\mid_m\alpha&=&\det(\alpha)^{m/2}h(\alpha\,z)j(\alpha,z)^m,
\end{eqnarray*}
where $m$ is an integer, by $\alpha\,z$ we mean the linear fractional transformation on $\mathfrak H$, and put the factor of automorphy as $j(\alpha,z)=(cz+d)^{-1}$ for the second row $(c,d)$ of $\alpha$. 

Let $k$, $l$ and $N$ be positive integers and always assume $N$ so throughout the present paper. We deal with a discrete group defined by
$$
\varGamma_0(N)=\left\{\left.\begin{pmatrix}a&b\\c&d\end{pmatrix}\in{\rm SL}_2(\mathbf Z)\right|c\equiv0\pmod N\right\}.
$$
Then $S_k(\varGamma_0(N))$ and $G_k(\varGamma_0(N))$ denote the $\mathbf C$-linear spaces consisting of all cusp forms and holomorphic modular forms of weight $k$ for $\varGamma_0(N)$, respectively, and $S_k(\varGamma_0(N))$ is furnished with the Petersson inner product $\langle*,*\rangle$.

for any $f\in S_k(\varGamma_0(N))$ and positive divisor $M$ of $N$, we consider the trace of $f$ to level $M$ defined by
$$
{\rm Tr}^N_M(f)=\sum_{\gamma\in\varGamma_0(N)\backslash\varGamma_0(M)}f\mid_k\gamma\in S_k(\varGamma_0(M)),
$$
and for a positive integer $\lambda$ with $\lambda>2$, take the following Eisenstein series
$$
E_{\lambda,N}(z)=\sum_{\gamma\in\varGamma_\infty\backslash\varGamma_0(N)}j(\gamma,z)^\lambda\in G_{\lambda}(\varGamma_0(N)),
$$
where $\varGamma_{\infty}$ is a subgroup of $\varGamma_0(N)$ consisting of all elements fixing the point at infinity, namely
$$
\varGamma_{\infty}=\left\{\pm\left.\begin{pmatrix}1&n\\0&1\end{pmatrix}\right| n\in\mathbf Z\right\}.
$$ 
for any $f\in S_k(\varGamma_0(N))$ and $g\in G_l(\varGamma_0(N))$, the associated zeta function is given by the series
$$
D(s, f, g)=\sum_{n=1}^\infty c_n(f)c_n(g)n^{-s}\ \ {\rm for\ any}\ s\in\mathbf C,
$$
where $c_n(f)$ and $c_n(g)$ are the $n$-th Fourier coefficient of $f$ and $g$, respectively. 
Then the starting point of the paper is the following.
\begin{thm}[cf. {\cite[Theorem]{DHM}}]\label{dhm}
Let $k$, $l$, $\lambda$ and $\mu$ be positive integers with $k=l+\lambda$ and $\lambda>2$. for any $g\in S_l(\varGamma_0(N))$, we have
\begin{equation}
\mathrm{Tr}^N_1((gE_{\lambda,N})^\mu)=c_1
\sum_{i=1}^{r}\frac{D(k\mu-1,f_i,g^\mu E_{\lambda,N}^{\mu-1})}{\pi^{k\mu}\langle f_i,f_i\rangle}f_i, 
\label{e0.0}
\end{equation}
where $c_1=3\cdot4^{-(k\mu-1)}(k\mu-2)!$ and $\{f_1,\cdots,f_r\}$ is a unique basis of $S_{k\mu}({\rm SL}_2(\mathbf Z))$ consisting of primitive forms.
\end{thm}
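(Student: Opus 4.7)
The plan is to expand the left-hand side in the basis $\{f_{1},\dots,f_{r}\}$ of primitive forms of $S_{k\mu}(\mathrm{SL}_{2}(\mathbf{Z}))$ and evaluate each coefficient by a Rankin--Selberg unfolding. Since $g\in S_{l}(\varGamma_{0}(N))$ vanishes at every cusp, $gE_{\lambda,N}\in S_{k}(\varGamma_{0}(N))$ and hence $(gE_{\lambda,N})^{\mu}\in S_{k\mu}(\varGamma_{0}(N))$, so the trace lies in $S_{k\mu}(\mathrm{SL}_{2}(\mathbf{Z}))$ and may be written as $\sum_{i=1}^{r}a_{i}f_{i}$ with $a_{i}=\langle\mathrm{Tr}^{N}_{1}((gE_{\lambda,N})^{\mu}),f_{i}\rangle/\langle f_{i},f_{i}\rangle$. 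Unfolding the integral $\int_{\mathrm{SL}_{2}(\mathbf{Z})\backslash\mathfrak{H}}$ over cosets of $\varGamma_{0}(N)\backslash\mathrm{SL}_{2}(\mathbf{Z})$, using the $\mathrm{SL}_{2}(\mathbf{Z})$-invariance of $f_{i}\overline{f_{i}}y^{k\mu}$, converts the numerator into the Petersson pairing $\langle(gE_{\lambda,N})^{\mu},f_{i}\rangle$ on $\varGamma_{0}(N)\backslash\mathfrak{H}$, up to a constant determined by the covering degree and the normalization of $\langle\cdot,\cdot\rangle$.

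For this level-$N$ pairing I factor $(gE_{\lambda,N})^{\mu}=H\cdot E_{\lambda,N}$ with $H:=g^{\mu}E_{\lambda,N}^{\mu-1}$, a holomorphic form of weight $k\mu-\lambda$ on $\varGamma_{0}(N)$ that still vanishes at every cusp (because $g^{\mu}$ does), and apply the classical Rankin--Selberg unfolding to the outer Eisenstein factor. A direct computation shows that
\[
\Phi(z):=H(z)\,\overline{f_{i}(z)}\,y^{k\mu}
\]
satisfies $\Phi(\gamma z)=j(\gamma,z)^{\lambda}\Phi(z)$ for every $\gamma\in\varGamma_{0}(N)$, hence $\Phi(z)E_{\lambda,N}(z)=\sum_{\gamma\in\varGamma_{\infty}\backslash\varGamma_{0}(N)}\Phi(\gamma z)$. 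The hypothesis $\lambda>2$ gives absolute convergence and justifies
\[
\int_{\varGamma_{0}(N)\backslash\mathfrak{H}}\Phi(z)\,E_{\lambda,N}(z)\,\frac{dx\,dy}{y^{2}}=\int_{\varGamma_{\infty}\backslash\mathfrak{H}}H(z)\,\overline{f_{i}(z)}\,y^{k\mu-2}\,dx\,dy.
\]
On the strip $0\leq x<1$, $y>0$ the $x$-integration enforces matching Fourier indices while the $y$-integration gives $(k\mu-2)!/(4\pi n)^{k\mu-1}$ for each $n\ge 1$; since $f_{i}$ is a primitive form at level one, its Fourier coefficients are totally real, so $\overline{c_{n}(f_{i})}=c_{n}(f_{i})$ and the surviving series is exactly $D(k\mu-1,f_{i},g^{\mu}E_{\lambda,N}^{\mu-1})$.

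Combining these steps yields
\[
\langle(gE_{\lambda,N})^{\mu},f_{i}\rangle_{\varGamma_{0}(N)}=\frac{(k\mu-2)!}{(4\pi)^{k\mu-1}}\,D(k\mu-1,f_{i},g^{\mu}E_{\lambda,N}^{\mu-1}),
\]
and dividing by $\langle f_{i},f_{i}\rangle$ and folding in the adjointness constant gives (\ref{e0.0}). The main obstacle I anticipate is not conceptual but purely the bookkeeping of scalar factors: the explicit constant $c_{1}=3\cdot 4^{-(k\mu-1)}(k\mu-2)!$ bundles the Gamma factor $(k\mu-2)!/(4\pi)^{k\mu-1}$ coming from the Mellin integral with the ratio $\pi/\mathrm{vol}(\mathrm{SL}_{2}(\mathbf{Z})\backslash\mathfrak{H})=3$ produced by the chosen normalization of $\langle\cdot,\cdot\rangle$ and the covering degree $[\mathrm{SL}_{2}(\mathbf{Z}):\varGamma_{0}(N)]$. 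Once these constants are aligned with the author's conventions the identity (\ref{e0.0}) drops out; the substantive input, namely the one-line rewriting $(gE_{\lambda,N})^{\mu}=g^{\mu}E_{\lambda,N}^{\mu-1}\cdot E_{\lambda,N}$, is what upgrades the classical $\mu=1$ Rankin--Selberg formula to general $\mu$.
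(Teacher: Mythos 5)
Your proposal is correct and follows essentially the same route as the paper: expand the trace in the orthogonal basis of primitive forms, unfold $\langle\mathrm{Tr}^N_1(\cdot),f_i\rangle$ to a level-$N$ pairing, and evaluate that pairing by Rankin--Selberg unfolding of the factorization $(gE_{\lambda,N})^\mu=g^\mu E_{\lambda,N}^{\mu-1}\cdot E_{\lambda,N}$, with $\lambda>2$ guaranteeing convergence and the reality of the $c_n(f_i)$ identifying the series with $D(k\mu-1,f_i,g^\mu E_{\lambda,N}^{\mu-1})$. The paper packages the unfolding step as its Lemma \ref{rs} (run in the opposite direction, from the Mellin integral to the Eisenstein integral) and, in its generalized Theorem \ref{gdhm}, prepends an Atkin--Lehner argument that is vacuous at level $M=1$; your constant bookkeeping $(k\mu-2)!/(4\pi)^{k\mu-1}$ times $3/\pi$ from $v(\mathrm{SL}_2(\mathbf Z)\backslash\mathfrak H)=\pi/3$ reproduces $c_1/\pi^{k\mu}$ exactly.
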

When we take the above $g$ as all the Fourier coefficients belong to $\mathbf Q$, by {\cite[Theorem 3]{S2}} see that each coefficient of the right-hand side of the equation \eqref{e0.0} belongs to the Hecke field $\mathbf Q_{f_i}$ that is the algebraic number field generated by adding all coefficients of $f_i$ to $\mathbf Q$. 
That is, the equation \eqref{e0.0} is a linear combination with algebraic coefficients for the trace of modular forms to level $1$.
 
On the other hand, the transformation polynomial for $gE_{\lambda,N}\in S_{k}(\varGamma_0(N))$ to level $1$ is defined by
$$
\Phi^N_1(X;gE_{\lambda,N})=\hspace{-2mm}\prod_{\gamma\in\varGamma_0(N)\backslash{\rm SL}_2(\mathbf Z)}\hspace{-3mm}\left(X-gE_{\lambda,N}\mid_k\gamma\right)=\sum_{i=0}^{\mu_N}(-1)^{i}s_i(gE_{\lambda,N})X^{\mu_N-i}.
$$
Here $\mu_N=[{\rm SL}_2(\mathbf Z):\varGamma_0(N)]$ is the index of $\varGamma_0(N)$ in ${\rm SL}_2(\mathbf Z)$, and the coefficient $s_i(gE_{\lambda,N})$ is the elementary symmetric polynomial of degree $i$ with respect to $\left\{gE_{\lambda,N}\mid_k\gamma\ ; \gamma\in\varGamma_0(N)\backslash{\rm SL}_2(\mathbf Z)\right\}$; it belongs to $S_{ki}({\rm SL}_2(\mathbf Z))$.

Let ${\rm E}_{\mathbf Q}$ be an elliptic curve over $\mathbf Q$ given by a Weierstrass equation and $\mathbf C/L$ the corresponding complex torus with $L=\mathbf Z\omega_1+\mathbf Z\omega_2$ and $\omega_1/\omega_2\in\mathfrak H$. Then we define the specialization of $f\in S_{k\mu}({\rm SL}_2(\mathbf Z))$ and $\Phi^N_1(X;gE_{\lambda,N})$ at ${\rm E}_{\mathbf Q}$ as follows:
\begin{eqnarray*}
f({\rm E}_{\mathbf Q})&=&(2\pi/\omega_2)^{k\mu}f(\omega_1/\omega_2),\\
\Phi^N_1(X;gE_{\lambda,N},{\rm E}_{\mathbf Q})&=&\sum_{i=0}^{\mu_N}(-1)^{i}(2\pi/\omega_2)^{ki}s_i(gE_{\lambda,N})(\omega_1/\omega_2)X^{\mu_N-i}.
\end{eqnarray*}
As it will see later, the specialized polynomial $\Phi^N_1(X;gE_{\lambda,N},{\rm E}_{\mathbf Q})$ belongs to $\mathbf  Q[X]$ and we may choose $g$, ${\rm E}_{\mathbf Q}$ as it is irreducible over $\mathbf Q$ (cf. Remarks \ref{rem-1} and \ref{rem-2}). Moreover, we assume that $S_{k\mu}({\rm SL}_2(\mathbf Z))$ is spaned by all ${\rm Aut}(\mathbf C)$-conjugates of an element $f$ of $\{f_1,\cdots,f_r\}$. Then it was suggested by \cite{DHM} that a nontrivial equality between distinct algebraic number fields is obtained by specializing the equation \eqref{e0.0} at ${\rm E}_{\mathbf Q}$:
\begin{eqnarray}
\lefteqn{
{\rm Tr}_{\mathbf Q_N/\mathbf Q}\left\{(2\pi/\omega_2)^k gE_{\lambda,N}(\omega_1/\omega_2)\right\}^\mu}\label{e0.1}\\
&=& c_1{\rm Tr}_{\mathbf Q_f/\mathbf Q}\left\{\frac{D(k\mu-1,f,g^{\mu}E_{\lambda,N}^{\mu-1})}{\pi^{k\mu}\langle f,f\rangle}
(2\pi/\omega_2)^{k\mu}f(\omega_1/\omega_2)
\right\},\nonumber
\end{eqnarray}
where $\mathbf Q_N$ is an algebraic number field associated with the $J$-invariant of $E_{\mathbf Q}$, namely
\[
\mathbf Q_N={\mathbf Q}(J(\omega_1/\omega_2), J(N\omega_1/\omega_2)).
\]
We may interpret the formula \eqref{e0.1} as an equation which bridges the gap between the fields $\mathbf Q_N$ related to elliptic curves and $\mathbf Q_f$ related to modular forms via the trace to $\mathbf Q$, where note that $\mathbf Q_N$ is independent of $\mu$, although $\mathbf Q_f$ depends on it. Meanwhile, by the equations \eqref{e0.0} and \eqref{e0.1}, we see that all coefficients of the transformation polynomial $\Phi^N_1(X;gE_{\lambda,N})$ and its specialization $\Phi^N_1(X;gE_{\lambda,N},{\rm E}_{\mathbf Q})$ are expressed by using the special values of the associated zeta functions.

Now, an aim of the paper is to show a generalization of the equation \eqref{e0.0} to the case of the trace of cusp forms with a Dirichlet character to higher levels (cf. Theorem \ref{gdhm}). This objective is achieved by using the constant of each space of cusp forms uniquely determined by applying Atkin-Lehner theory. Another aim is to generalize the formula \eqref{e0.1} as application of our result (cf. Corollary \ref{gdhm-c}), and to derive the formula \eqref{e0.1} as a special case (cf. Corollary \ref{dhm-c}). In more detail, for modular forms with a Dirichlet character and suitable algebraic Fourier coefficients, we obtain a near formula to the formula \eqref{e0.1} by specializing the generalized equation at a pair consisting of an elliptic curve over the algebraic number fields and its a certain cyclic subgroup with finite order.

\begin{notation}
We mean ${\mathbf i}$ as $\sqrt{-1}$, and denote by $\overline z$, $|z|$, ${\rm Re}(z)$ and ${\rm Im}(z)$ the complex conjugate, the absolute value, the real part and the imaginary part of a complex number $z$, respectively. For two positive integers $M$ and $N$, the relation $M\mid N$ means that $M$ divides $N$. For two number fields $F$ and $K$, we denote the composite field of $F$ and $K$ by $FK$.
\end{notation}

%\newpage
\section{Preliminaries}
Let $k$, $l$ be positive integers and $\chi$, $\psi$ Dirichlet characters modulo $N$. Then $S_k(\varGamma_0(N),\chi)$ and $G_k(\varGamma_0(N),\chi)$ denote the $\mathbf C$-linear spaces consisiting of all cusp forms and holomorphic modular forms with $\chi$ of weight $k$ for $\varGamma_0(N)$, respectively, that is, those every element $g(z)$ satisfies the following condition:
$$
g\mid_{k}\gamma=\chi(\gamma)g\ \ {\rm for\ any}\ \gamma=\begin{pmatrix}a&b\\c&d\end{pmatrix}\in\varGamma_0(N),\ {\rm where\ put}\ \chi(\gamma)=\chi(d).
$$
As basic rules, by $\chi\psi$ and $\overline{\chi}$ we denote $\chi\psi(\gamma)=\chi(\gamma)\psi(\gamma)$ and $\overline{\chi}(\gamma)=\overline{\chi(\gamma)}$ for any $\gamma\in\varGamma_0(N)$, then note that $\chi\overline{\chi}$ is trivial (i.e. $\chi\overline{\chi}(\gamma)=1$). 

For any $f(z)\in S_k(\varGamma_0(N),\chi)$ and $g(z)\in G_l(\varGamma_0(N),\psi)$, whose Fourier expansions written as follows:
\[
f(z) = \sum_{n = 1}^{\infty} c_n(f) \exp(2\pi {\mathbf i}nz), \ \ \ \ 
g(z) = \sum_{n = 0}^{\infty} c_n(g) \exp(2\pi {\mathbf i}nz). \ \ \ \ 
\]
The associated Rankin-Selberg zeta function is defined by
\[
D(s,f,g)=\sum_{n=1}^\infty c_n(f)c_n(g)n^{-s} \ \ {\rm for\ any}\ s \in {\mathbf C}.
\]
For any integer $\lambda$ and complex number $s$, we define the following series
$$
E_{\lambda,N}(z,s,\chi)=\sum_{\gamma\in\varGamma_{\infty}\backslash\varGamma_0(N)}\overline{\chi(\gamma)}j(\gamma,z)^\lambda|j(\gamma,z)|^{2s}\ \ {\rm for\ any}\ z\in\mathfrak H.
$$
If $\lambda$, $s$ satisfy $\lambda+{\rm Re}(2s)>2$, the series absolutely and uniformly converges on $\mathfrak H$. In particular, $E_{\lambda,N}(z,0,\chi)$ with $\lambda>2$ belongs to $G_\lambda (\varGamma_0(N),\chi)$ and is called the Eisensten series. We simply denote $E_{\lambda,N}(z,0,\chi)$ by $E_{\lambda,N,\chi}(z)$, furthermore $E_{\lambda,N}(z)$ whenever $\chi$ is trivial. 

We put $f^\rho(z)=\overline{f(-\overline z)}$. Then it is easy to see that $f^\rho$ belongs to $S_k(\varGamma_0(N),\overline\chi)$ and its Fourier coefficient is expressed by $\overline{c_n(f)}$ for any positive integer $n$. These notions are mutually connected on the following.
\begin{lem}\label{rs}
Let $k$, $l$ be positive integers. for any $f\in S_k(\varGamma_0(N),\chi)$ and $g\in S_l(\varGamma_0(N),\psi)$, the zeta function $D(s,f,g)$ is expressed by an integral:
\begin{eqnarray*}
\lefteqn{
(4\pi)^{-s}\Gamma(s)D(s,f,g)
}\\
&=&\int_{\varGamma_0(N)\backslash\mathfrak{H}}\overline{f^{\rho}(z)}g(z)E_{k-l,N}(z,s+1-k,\overline{\chi\psi})({\rm Im}(z))^{s+1}dv(z), 
\end{eqnarray*}
where $dv(z)$ is a ${\rm GL}_2^{+}(\mathbf R)$-invariant measure on $\mathfrak{H}$ defined by $dv(z)=y^{-2}dxdy$ for all $z=x+{\bf i}\,y\in\mathfrak H$ and $\Gamma$ is the gamma function.
\end{lem}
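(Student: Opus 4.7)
The plan is to execute the classical Rankin--Selberg unfolding. First I would substitute the definition of the Eisenstein series $E_{k-l,N}(z,s+1-k,\overline{\chi\psi})$ inside the integral and interchange the sum over $\gamma\in\varGamma_\infty\backslash\varGamma_0(N)$ with the integration over $\varGamma_0(N)\backslash\mathfrak{H}$; this is legitimate in the region where $\lambda+\mathrm{Re}(2s)>2$, i.e.\ for $\mathrm{Re}(s)$ sufficiently large. The resulting expression is a sum of integrals whose $\gamma$-th integrand equals
\[
\chi\psi(\gamma)\,\overline{f^\rho(z)}\,g(z)\,(cz+d)^{l-s-1}(c\bar z+d)^{k-s-1}\,(\mathrm{Im}\,z)^{s+1},
\]
after using $j(\gamma,z)=(cz+d)^{-1}$ and $|cz+d|^{-2(s+1-k)}=(cz+d)^{k-s-1}(c\bar z+d)^{k-s-1}$.

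The second step is the bookkeeping that exhibits this integrand as $F(\gamma z)$ for a fixed function $F$. Using $y^{s+1}=(cz+d)^{s+1}(c\bar z+d)^{s+1}(\mathrm{Im}\,\gamma z)^{s+1}$ the mixed factor becomes $(cz+d)^{l}(c\bar z+d)^{k}(\mathrm{Im}\,\gamma z)^{s+1}$. Applying the automorphy $g(\gamma z)=\psi(\gamma)(cz+d)^{l}g(z)$ and the analogous rule $\overline{f^\rho(\gamma z)}=\chi(\gamma)(c\bar z+d)^{k}\overline{f^\rho(z)}$ (which follows from $f^\rho\in S_k(\varGamma_0(N),\overline\chi)$ via the identity $-\overline{\gamma z}=\gamma^\ast(-\bar z)$ with $\gamma^\ast=\mathrm{diag}(1,-1)\gamma\mathrm{diag}(1,-1)\in\varGamma_0(N)$), I collect all the character contributions: the Eisenstein summand contributes $\chi\psi(\gamma)$, and rewriting $\overline{f^\rho(z)}g(z)(cz+d)^l(c\bar z+d)^k$ in terms of $\overline{f^\rho(\gamma z)}g(\gamma z)$ introduces $\overline{\chi\psi(\gamma)}$. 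These cancel, and the $\gamma$-th integrand is exactly $F(\gamma z)$ with
\[
F(z)=\overline{f^\rho(z)}\,g(z)\,(\mathrm{Im}\,z)^{s+1}.
\]

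With this invariance in hand, the $\mathrm{GL}_2^+(\mathbf R)$-invariance of $dv(z)$ unfolds the sum--integral into a single integral over a fundamental domain for $\varGamma_\infty$, which can be taken to be the strip $\{z=x+\mathbf{i}y:0\le x<1,\ y>0\}$. Inserting the Fourier expansions
\[
\overline{f^\rho(z)}=\sum_{n\ge1}c_n(f)e^{-2\pi nx\mathbf i-2\pi ny},\qquad g(z)=\sum_{m\ge1}c_m(g)e^{2\pi mx\mathbf i-2\pi my},
\]
the integration in $x$ kills all off-diagonal terms by $\int_0^1 e^{2\pi\mathbf i(m-n)x}dx=\delta_{m,n}$, leaving $\sum_{n\ge1}c_n(f)c_n(g)e^{-4\pi ny}$. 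The remaining integration in $y$, with measure $y^{s-1}dy$ (the $y^{-2}$ of $dv(z)$ combining with $y^{s+1}$), is a Mellin transform producing $(4\pi n)^{-s}\Gamma(s)$, and summing over $n$ gives $(4\pi)^{-s}\Gamma(s)D(s,f,g)$, as required.

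The main obstacle is the careful transformation calculation in the second step. Getting the weight and character of the Eisenstein series to match is delicate: one must track separately the holomorphic factors $(cz+d)$ (absorbed by $g$), the anti\-holomorphic factors $(c\bar z+d)$ (absorbed by $\overline{f^\rho}$), the characters $\chi$, $\psi$ from the respective automorphy laws, and the character $\chi\psi$ coming from $E_{k-l,N}(\,\cdot\,,\,\cdot\,,\overline{\chi\psi})$; the identity only works because these four ingredients combine precisely to produce a $\varGamma_\infty$-invariant (in particular, $\varGamma_0(N)$-equivariant) integrand, which is the content of the chosen Eisenstein parameters.
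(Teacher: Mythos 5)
Your proposal is correct and is essentially the paper's own argument: both rest on the coset decomposition $\varGamma_\infty\backslash\mathfrak{H}=\bigsqcup_{\gamma\in\varGamma_\infty\backslash\varGamma_0(N)}\gamma\cdot(\varGamma_0(N)\backslash\mathfrak{H})$, the same automorphy/character bookkeeping showing the integrand is $F(\gamma z)$ for $F(z)=\overline{f^\rho(z)}g(z)(\mathrm{Im}\,z)^{s+1}$, and the same Fourier-orthogonality-plus-Mellin computation over the strip. The only difference is the direction of presentation (you unfold the fundamental-domain integral into the strip, the paper computes the strip integral and folds it up), which is cosmetic.
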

\begin{proof}
Since $\overline{f^\rho}$ and $g$ are holomorphic on $\mathfrak{H}$, these Fourier expansions are termwise integrable for $x$ from $0$ to $1$. Hence we have 
\begin{eqnarray*}
\int_{0}^1\overline{f^\rho(z)}g(z)dx\hspace{-2mm}&=&\hspace{-2mm}\sum_{n=1}^{\infty}\sum_{m=0}^\infty c_n(f)c_m(g)\exp(-2\pi(n+m)y)\int_0^1\exp(2\pi{\bf i}(-n+m)x)dx\\
&=&\hspace{-2mm}\sum_{n=1}^{\infty}c_n(f)c_n(g)\exp(-4\pi ny).
\end{eqnarray*}
For any $s\in\mathbf C$ and $t\in\mathbf R$ with $t>1$, the series $\sum_{n=1}^\infty c_n(f)c_n(g)y^{s-1}\exp(-4\pi ny)$ absolutely and uniformly converges on $t^{-1}\le y\le t$. Therefore we have
$$
\int_{t^{-1}}^ty^{s-1}\int_0^1\overline{f^\rho(z)}g(z)dxdy=\sum_{n=1}^\infty c_n(f) c_n(g)\int_{t^{-1}}^ty^{s-1}\exp(-4\pi ny)dy.
$$
After a simple change of variable, by taking $t$ to $\infty$, we have the following
\begin{eqnarray*}
\int_0^\infty y^{s-1}\int_0^1\overline{f^\rho(z)}g(z)dxdy=(4\pi)^{-s}\Gamma(s)D(s,f,g).
\end{eqnarray*}
Furthermore, by the disjoint partition $\varGamma_\infty\backslash \mathfrak{H}=\bigsqcup_{\gamma\in \varGamma_\infty\backslash\varGamma_0(N)}\gamma\cdot(\varGamma_0(N)\backslash \mathfrak{H})$, the left-side hand of the above equation is transformed as follows:
\begin{eqnarray*}
\lefteqn{
\int_0^\infty y^{s-1}\int_0^1\overline{f^\rho(z)}g(z)dxdy=\int_{\varGamma_\infty\backslash \mathfrak{H}}y^{s+1}\overline{f^\rho(z)}g(z)dv(z) 
}
\\
&=&\sum_{\gamma\in \varGamma_\infty\backslash\varGamma_0(N)}
\int_{\varGamma_0(N)\backslash\mathfrak{H}}\overline{f^\rho(\gamma z')}g(\gamma z')({\rm Im}(\gamma z'))^{s+1}d(\gamma z')\\
&=&\int_{\varGamma_0(N)\backslash\mathfrak{H}}\sum_{\gamma\in \varGamma_\infty\backslash\varGamma_0(N)}\overline{f^\rho(\gamma z')}g(\gamma z')
\overline{j(\gamma,z')}^{s+1}j(\gamma,z')^{s+1}({\rm Im}(z'))^{s+1}dv(z')\\
&=&\int_{\varGamma_0(N)\backslash\mathfrak{H}}\hspace{-5mm}\overline{f^\rho(z')}g(z')\hspace{-3mm}\sum_{\gamma\in\varGamma_\infty\backslash\varGamma_0(N)}
\hspace{-3mm}\chi(\gamma)\psi(\gamma)j(\gamma,z')^{k-l}|j(\gamma,z')|^{2(s+1-k)}({\rm Im}(z'))^{s+1}dv(z')\\
&=&\int_{\varGamma_0(N)\backslash\mathfrak{H}}\overline{f^\rho(z)}g(z)E_{k-l,N}(z,s+1-k,\overline{\chi\psi})({\rm Im}(z))^{s+1}dv(z).
\end{eqnarray*}
This completes the proof.
\end{proof}
Let $f$, $g$ be elements of $S_k(\varGamma_0(N),\chi)$. The following gives a complex inner product on $S_k(\varGamma_0(N),\chi)$ and is referred to as the Petersson inner product:
$$
\langle f,g\rangle=v(\varGamma_0(N)\backslash\mathfrak H)^{-1}\int_{\varGamma_0(N)\backslash\mathfrak H}f(z)\overline{g(z)}({\rm Im}(z))^kdv(z).
$$
\begin{rem}
On Lemma \ref{rs}, suppose that $s+1-k=0$, $k-l>2$ and $\chi$ is trivial. Then, since $gE_{k-l,N,\overline{\psi}}\in S_k(\varGamma_0(N))$, we have
\begin{eqnarray*}
(4\pi)^{-(k-1)}\Gamma(k-1)D(k-1,f,g)&=&\int_{\varGamma_0(N)\backslash\mathfrak{H}}\overline{f^{\rho}(z)}g(z)E_{k-l,N,\overline{\psi}}(z)({\rm Im}(z))^{k}dv(z)\\
&=&v(\varGamma_0(N)\backslash\mathfrak H)\langle gE_{k-l,N,\overline{\psi}},f^{\rho}\rangle.
\end{eqnarray*}
Therefore $D(k-1,f,g)$ is meaningful.
\end{rem}
Let $t$ be a positive integer. We deal with two linear operators as follows:
\begin{eqnarray*}
S_k(\varGamma_0(N),\chi)\ni f&\longmapsto& f\mid_kB_t\in S_k(\varGamma_0(Nt),\chi)\ \ {\rm for}\ B_t=\begin{pmatrix}t&0\\0&1\end{pmatrix},\\
S_k(\varGamma_0(N),\chi)\ni f&\longmapsto& f\mid_k\omega_N\in S_k(\varGamma_0(N),\overline\chi)\ \ {\rm for}\ \omega_N=\begin{pmatrix}0&-1\\N&0\end{pmatrix}.
\end{eqnarray*}
We define the subspace of $S_k(\varGamma_0(N),\chi)$ consisting of oldforms by
\[
S_k^{\rm old}(\varGamma_0(N),\chi)=\sum_{\scriptstyle c_{\chi} \mid M\mid N \atop {\scriptstyle M \ne N}}\sum_{\hspace{1mm}1\le t \, \mid \,(N/M)}S_k(\varGamma_0(M),\chi)\mid_{k}B_t, 
\]
where $c_{\chi}$ is the conductor of $\chi$, and then 
the subspace of newforms is defined as 
its orthogonal complement with respect to the Petersson inner product: 
\[
S_k^{\rm new}(\varGamma_0(N),\chi)=\left\{f\in S_k(\varGamma_0(N),\chi)\ \left|\ \langle f,g\rangle=0\ \ {\rm for\ any}\ g\in S_k^{\rm old}(\varGamma_0(N),\chi)\right.\right\}.
\]
\begin{defn}
Let $M$ a positive divisor of $N$ and $\chi$ a Dirichlet character modulo $M$. The following linear operator descending the levels of modular forms is called the {\it trace operator}:
$$
\mathrm{Tr}^N_M:S_k(\varGamma_0(N),\chi)\longrightarrow S_k(\varGamma_0(M),\chi)\ \ {\rm as}\ f\longmapsto\hspace{-5mm} \sum_{\gamma\in\varGamma_0(N)\backslash\varGamma_0(M)}\hspace{-5mm}\overline{\chi(\gamma)}f\mid_{k}\gamma.
$$
\end{defn}
For any positive integer $L$ with $M\mid L\mid N$, it is easy to see that
$$
{\rm Tr}^N_M={\rm Tr}^L_M\circ{\rm Tr}^N_L,
$$
that is, the trace operator is independent of the choice of intermediate level. The following two Lemmas are elementary, especially the latter gives the characterization of newforms in terms of the trace operator.
\begin{lem}[cf. {\cite[Lemma 6]{L}}]\label{L.6}
Suppose that $f\in S_k(\varGamma_0(N),\chi)$ and $q$ is a prime dividing $N/c_\chi$. For any positive integer $d$ with $\gcd(d,q)=1$,
$$
{\rm Tr}^{Nd}_{Nd/q}(f\mid_{k}B_d)={\rm Tr}^N_{N/q}(f)\mid_{k}B_d.
$$
\end{lem}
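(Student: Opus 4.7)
My plan is to unwind both sides into sums of coset representatives and construct an explicit bijection between $\varGamma_0(Nd)\backslash\varGamma_0(Nd/q)$ and $\varGamma_0(N)\backslash\varGamma_0(N/q)$ that intertwines right-multiplication by $B_d$. Specifically, to each $\gamma' = \begin{pmatrix} a & b \\ c & e \end{pmatrix}\in\varGamma_0(Nd/q)$ I would associate
\[
\phi(\gamma')=B_d\gamma' B_d^{-1}=\begin{pmatrix} a & bd \\ c/d & e \end{pmatrix},
\]
so that the intertwining relation $B_d\gamma' = \phi(\gamma')B_d$ holds by construction and the $(2,2)$-entries of $\gamma'$ and $\phi(\gamma')$ coincide.

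The first task is to verify $\phi$ lands in $\varGamma_0(N/q)$: writing $c = (Nd/q)c'$ with $c'\in\mathbf{Z}$, one sees $c/d = (N/q)c'\in\mathbf{Z}$, so $\phi(\gamma')$ is integral of determinant $1$ and has $(2,1)$-entry divisible by $N/q$. Moreover, $\phi(\gamma')\in\varGamma_0(N)$ is equivalent to $q\mid c'$, which is the same as $\gamma'\in\varGamma_0(Nd)$. Hence $\phi$ descends to a well-defined injective map $\bar\phi$ on coset spaces. For surjectivity I would compare cardinalities using $\mu_M = M\prod_{p\mid M}(1+1/p)$: the index $[\varGamma_0(M/q):\varGamma_0(M)]$ equals $q$ if $q^2\mid M$ and $q+1$ otherwise, and the hypothesis $\gcd(d,q)=1$ ensures that the $q$-adic valuations of $N$ and $Nd$ agree, so both indices coincide and $\bar\phi$ is bijective.

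Once the bijection is in place, the lemma follows by direct computation. Using $B_d\gamma' = \phi(\gamma')B_d$ together with $\chi(\gamma') = \chi(\phi(\gamma'))$ (both depending only on the common entry $e$, with $\chi$ lifted consistently since $q\mid N/c_\chi$ gives $c_\chi\mid N/q \mid Nd/q$), one obtains
\[
{\rm Tr}^{Nd}_{Nd/q}(f\mid_k B_d)=\sum_{\gamma'}\overline{\chi(\gamma')}\,f\mid_k(B_d\gamma')=\sum_\gamma\overline{\chi(\gamma)}\,f\mid_k(\gamma B_d)={\rm Tr}^N_{N/q}(f)\mid_k B_d,
\]
where the middle equality rewrites the sum along $\bar\phi$.

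The main obstacle is the bookkeeping around the bijection: both the integrality of the $(2,1)$-entry of $\phi(\gamma')$ and the matching of the two indices use $\gcd(d,q)=1$ in essential ways, and one must be careful that the lifts of $\chi$ to $\varGamma_0(Nd/q)$ and to $\varGamma_0(N/q)$ are taken consistently so that character values are preserved by the bijection.
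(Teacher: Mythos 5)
The paper does not prove this lemma --- it is stated as ``elementary'' with a citation to Li's paper --- so there is no in-text proof to compare against. Your argument is correct and complete: conjugation by $B_d$ sends $\varGamma_0(Nd/q)$ into $\varGamma_0(N/q)$ and $\varGamma_0(Nd)$ into $\varGamma_0(N)$, the induced map on left cosets is injective because $\phi(\gamma')\in\varGamma_0(N)$ iff $\gamma'\in\varGamma_0(Nd)$, and surjectivity follows from the index count $[\varGamma_0(M/q):\varGamma_0(M)]=q$ or $q+1$ according as $q^2\mid M$ or not, which is unchanged under $N\mapsto Nd$ precisely because $\gcd(d,q)=1$; the character values match since $\phi$ preserves the $(2,2)$-entry and $c_\chi\mid N/q$. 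This is the standard route (essentially Li's own), so no issues.
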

\begin{lem}[cf. {\cite[Theorem 4]{L}}]\label{L.4}
Suppose that $f\in S_k(\varGamma_0(N),\chi)$. Then $f$ belongs to $S_k^{\rm new}(\varGamma_0(N),\chi)$ if and only if for any prime $q$ dividing $N/c_\chi$,
$$
{\rm Tr}^N_{N/q}(f)=0={\rm Tr}^N_{N/q}(f\mid_{k} \omega_N).
$$
\end{lem}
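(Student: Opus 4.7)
My plan is to use the adjointness of the trace operator under the Petersson inner product, combined with a reduction of $S_k^{\mathrm{old}}(\varGamma_0(N),\chi)$ to two simple families of generators, each of which gives rise to one of the two trace-vanishing conditions.

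First I would verify the adjointness identity: for $f\in S_k(\varGamma_0(N),\chi)$ and $g\in S_k(\varGamma_0(N/q),\chi)$, decomposing $\varGamma_0(N)\backslash\mathfrak{H}=\bigsqcup_\gamma\gamma\cdot(\varGamma_0(N/q)\backslash\mathfrak{H})$ with $\gamma$ running over $\varGamma_0(N)\backslash\varGamma_0(N/q)$ and invoking the $\chi$-automorphy of $g$ yields
\[
v(\varGamma_0(N)\backslash\mathfrak{H})\,\langle f,g\rangle_N
= v(\varGamma_0(N/q)\backslash\mathfrak{H})\,\langle \mathrm{Tr}^N_{N/q}(f),g\rangle_{N/q}.
\]
In particular, $\mathrm{Tr}^N_{N/q}(f)=0$ is equivalent to $f\perp S_k(\varGamma_0(N/q),\chi)$ inside $S_k(\varGamma_0(N),\chi)$. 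To handle the other family of oldforms, I would use the matrix identity $B_q\omega_N=q\cdot\omega_{N/q}$ (where the scalar factor $qI$ acts trivially under $|_k$), together with the fact that $h\mapsto h|_k\omega_N$ is an isometry from $S_k(\varGamma_0(N),\chi)$ onto $S_k(\varGamma_0(N),\overline\chi)$. These give
\[
\langle f,g|_kB_q\rangle_N=\langle f|_k\omega_N,(g|_kB_q)|_k\omega_N\rangle_N=\langle f|_k\omega_N,g|_k\omega_{N/q}\rangle_N,
\]
which by the adjointness applied on the $\overline\chi$-side translates $f\perp S_k(\varGamma_0(N/q),\chi)|_kB_q$ into $\mathrm{Tr}^N_{N/q}(f|_k\omega_N)=0$.

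It remains to show that the oldform subspace is spanned by these two types of terms as $q$ ranges over prime divisors of $N/c_\chi$. Given a generic generator $g|_kB_t$ with $g\in S_k(\varGamma_0(M),\chi)$, $c_\chi\mid M\mid N$, $M\neq N$, and $t\mid N/M$, I pick a prime $q\mid N/M$ (which exists since $M\neq N$, and automatically divides $N/c_\chi$): if $q\nmid t$, then $Mt\mid N/q$, so $g|_kB_t\in S_k(\varGamma_0(N/q),\chi)$; if $t=qt'$, then $g|_kB_t=(g|_kB_{t'})|_kB_q$ with $g|_kB_{t'}\in S_k(\varGamma_0(N/q),\chi)$. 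Hence
\[
S_k^{\mathrm{old}}(\varGamma_0(N),\chi)=\sum_{q\mid N/c_\chi}\bigl(S_k(\varGamma_0(N/q),\chi)+S_k(\varGamma_0(N/q),\chi)|_kB_q\bigr),
\]
and combining this reduction with the equivalences established above yields the lemma.

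The main obstacle I foresee is the careful bookkeeping of Dirichlet characters and volume normalizations under the Fricke involution, which sends $\chi$ to $\overline\chi$: the adjointness identity has to be applied on both the $\chi$- and $\overline\chi$-sides, and one has to verify the matrix identity $B_q\omega_N=q\cdot\omega_{N/q}$ and the isometry of $\omega_N$ with all factors of $N$, signs and determinants correctly in place. The remaining calculations are routine once these normalizations are fixed.
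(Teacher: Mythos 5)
Your argument is correct, and it is essentially the standard proof of this characterization (Li's Theorem 4): the paper itself gives no proof, citing \cite{L} and calling the lemma elementary, so there is nothing internal to compare against. The three ingredients you isolate --- adjointness of $\mathrm{Tr}^N_{N/q}$ under the Petersson product, the identity $B_q\omega_N=q\,\omega_{N/q}$ transporting the $|_kB_q$-family of oldforms to the $\overline\chi$-side via the Fricke isometry, and the regrouping of the oldspace generators $g|_kB_t$ according to whether $q\mid t$ --- fit together exactly as you describe, and the hypothesis $q\mid N/c_\chi$ is precisely what lets $\chi$ descend to modulus $N/q$ so that both traces land in spaces where the inner product is nondegenerate.
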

We call $f(z)=\sum_{n=1}^{\infty}c_n(f)\exp(2\pi{\bf i}nz) \in S_k^{\rm new}(\varGamma_0(N),\chi)$ is {\it a primitive form} or {\it primitive at level $N$} 
if it is normalized as $c_1(f)=1$ and is a Hecke eigenform, namely $f$ is a common eigenfunction with respect to all Hecke operators $T(n)$ for any positive integer $n$. As is well-known, $S_k(\varGamma_0(N),\chi)$ has a basis consisting of {\it Hecke eigenforms outside $N$} (i.e. those are Hecke eigenforms for any positive integer $n$ with gcd($n$,$N$)=$1$), and moreover $S_k^{\rm new}(\varGamma_0(N),\chi)$ has a unique basis consisting of primitive forms. 

Meanwhile, {\it if two nonzero Hecke eigenforms outside $N$ of $S_k^{\rm new}(\varGamma_0(N),\chi)$ have the same eigenvalues, they are equal without a constant multiple.}
This statement is said to the {\it multiplicity one property of newforms}. More elaborately, the following assertion holds:
\begin{prop}[Strong Multiplicity One Theorem]\label{M.1}
We define a subset of $S_k(\varGamma_0(N),\chi)$ as follows:
\[
\mathscr{S}_k(\varGamma_0(N),\chi)=\left\{f\in S_k(\varGamma_0(N),\chi)\mid f\textrm{ is a Hecke eigenform outside }N \right\}.
\]
For any $f$, $g\in \mathscr{S}_k(\varGamma_0(N),\chi)$, we say that $f$ and $g$ are equivalent if they have the same eigenvalues outside $N$. Then for each $f\in\mathscr{S}_k(\varGamma_0(N),\chi)$,
\begin{enumerate}
\item[]there exist a unique positive integer $M$ {\rm(}i.e. the conductor of $f${\rm)} and primitive form $f^\circ\in S_k^{\rm new}(\varGamma_0(M),\chi)$ as $c_\chi \mid M\mid N$ and $f^\circ$ is equivalent to $f$, and moreover $f$ is uniquely expressed as follows:
\begin{eqnarray*}
f=\sum_{1\le t\mid (N/M)}c_tf^\circ\mid_{k} B_t\ \ \textrm{for some}\ c_t\in\mathbf C.
\end{eqnarray*}
\end{enumerate}
\end{prop}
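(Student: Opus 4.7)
The plan is to diagonalize $f$ in the Atkin--Lehner basis of $S_k(\varGamma_0(N),\chi)$ and reduce the statement to the multiplicity-one property for primitive forms across all admissible levels.

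First, I would invoke the Atkin--Lehner basis theorem: the set
$$\mathcal{B} = \bigl\{f^\circ \mid_k B_t : c_\chi \mid M \mid N,\ f^\circ \in S_k^{\rm new}(\varGamma_0(M),\chi)\text{ primitive},\ 1 \le t \mid (N/M)\bigr\}$$
is a $\mathbf{C}$-basis of $S_k(\varGamma_0(N),\chi)$. Moreover every element of $\mathcal{B}$ lies in $\mathscr{S}_k(\varGamma_0(N),\chi)$: a short Fourier-coefficient calculation gives $T(n)(f^\circ\mid_k B_t) = c_n(f^\circ)\,(f^\circ\mid_k B_t)$ for every $n$ with $\gcd(n,N)=1$, using $\gcd(n,t)=1$ (since $t \mid N$) and the normalization $c_1(f^\circ)=1$. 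Hence $\mathcal{B}$ simultaneously diagonalizes $\{T(n) : \gcd(n,N)=1\}$.

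Next, expand the given $f$ as $f = \sum_{f^\circ,t} c_{f^\circ,t}\,(f^\circ \mid_k B_t)$, and let $\lambda_n$ be the eigenvalue of $T(n)$ on $f$ for $\gcd(n,N)=1$. Applying $T(n)$ and comparing coefficients in the basis $\mathcal{B}$ yields
$$c_{f^\circ,t}\bigl(c_n(f^\circ) - \lambda_n\bigr) = 0\quad\text{for every } n \text{ coprime to } N.$$
Thus the only basis vectors which can contribute to $f$ are those whose underlying primitive form $f^\circ$ satisfies $c_n(f^\circ) = \lambda_n$ for every $n$ coprime to $N$---that is, is equivalent to $f$ in the sense of the statement.

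At this point I would invoke (Atkin--Lehner) strong multiplicity one for primitive forms: two primitive forms, possibly at different levels dividing $N$, sharing the same $T(n)$-eigenvalues for all but finitely many $n$ must coincide (and so in particular have the same level). This singles out a \emph{unique} primitive form $f^\circ$, at a unique level $M$ with $c_\chi \mid M \mid N$, which is equivalent to $f$. Declaring this $M$ to be the conductor of $f$, the basis expansion collapses to $f = \sum_{1 \le t \mid (N/M)} c_t\,(f^\circ \mid_k B_t)$; uniqueness of the scalars $c_t$ is immediate from the linear independence of $\{f^\circ \mid_k B_t : t \mid (N/M)\} \subset \mathcal{B}$.

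The main obstacle is external to this short diagonalization argument: the proposition rests on two nontrivial structural inputs from Atkin--Lehner theory, namely the existence of the basis $\mathcal{B}$ (obtained by iterating the orthogonal decomposition $S_k(\varGamma_0(N),\chi) = S_k^{\rm new}(\varGamma_0(N),\chi)\oplus S_k^{\rm old}(\varGamma_0(N),\chi)$ across the divisor lattice of $N/c_\chi$) and the cross-level strong multiplicity-one statement for primitive forms. Granted these---both available in \cite{L} and the references cited there---the proposition reduces to the eigenbasis computation above.
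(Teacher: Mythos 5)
Your argument is correct and rests on exactly the same foundations as the paper, which gives no independent proof at all but simply cites the newform decomposition and strong multiplicity one from Miyake (Lemmas 4.6.2, 4.6.9 and Theorems 4.6.13, 4.6.19 of \cite{M}) --- precisely the two ``structural inputs'' you isolate. Your eigenbasis diagonalization is the standard way those inputs assemble into the stated proposition, so the proposal matches the paper's (implicit) route, just spelled out in more detail.
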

\begin{proof}
See {\cite[Lemmas 4.6.2, 4.6.9 and Theorems 4.6.13, 4.6.19]{M}}.
\end{proof}
\begin{rem}
On the above Proposition, especially if $\chi$ is a primitive character modulo $N$ (i.e. $c_\chi=N$) or $f\in S_k^{\rm new}(\varGamma_0(N),\chi)$, then $M=N$ and $f$ is a constant multiple of $f^\circ$.
\end{rem}
By virtue of Proposition \ref{M.1}, we define the following quantity
$$
\gcd(c_{f_1},\dots,c_{f_d}),
$$
where $\{f_1,\dots,f_d\}$ is a basis of $S_k(\varGamma_0(N),\chi)$ consisting of Hecke eigenforms outside $N$ and $c_{f_1},\dots,c_{f_d}$ are those conductors. Then this quantity is determined by $S_k(\varGamma_0(N),\chi)$ only, regardless of the choice of such basis. In fact, let $\{g_1,\dots,g_d\}$ be an other basis consisting of Hecke eigenforms outside $N$. For any $f_i$ with $1\le i\le d$, it immediately follows that there exist some $g_j$ with $1\le j\le d$ as $g_j$ is equivalent to $f_i$. From Proposition \ref{M.1}, this implies that $c_{f_i}=c_{g_j}$. Therefore $\left\{c_{f_1},\dots,c_{f_d}\right\}\subset\left\{c_{g_1},\dots,c_{g_d}\right\}$. Since the conditions of $\{f_1,\dots,f_d\}$ and $\{g_1,\dots,g_d\}$ are fair, the opposite inclusion similarly holds. We call $\gcd(c_{f_1},\dots,c_{f_d})$ {\it the conductor of $S_k(\varGamma_0(N),\chi)$} by abuse of language.

\section{Main theorem}
%We will prove a generalization of Theorem \ref{dhm}; it is now stated as follows:
\begin{thm}\label{gdhm}
Let $k$, $l$, $\lambda$ and $\mu$ be positive integers with $k=l+\lambda$ and $\lambda>2$. Suppose that the conductor of $S_{k\mu}(\varGamma_0(N))$ is $C$. For any $g\in S_l(\varGamma_0(N),\chi)$ and positive integer $M$ with $M\mid C$ and $\gcd(M,N/C)=1$, we have
\begin{equation}
\mathrm{Tr}^N_M((gE_{\lambda,N,\overline{\chi}})^\mu)=c_M\sum_{i=1}^{d}\frac{D(k\mu-1,f_i,g^\mu E^{\mu-1}_{\lambda,N,\overline{\chi}})}{\pi^{k\mu}\langle f_i,f_i\rangle}f_i,\label{0}
\end{equation}
where $c_M=3\cdot4^{-(k\mu-1)}(k\mu-2)![{\rm SL}_2(\mathbf Z):\varGamma_0(M)]^{-1}$ and $\{f_1,\dots,f_d\}$ is a unique basis of $S_{k\mu}^{\rm new}(\varGamma_0(M))$ consisting of primitive forms.
\end{thm}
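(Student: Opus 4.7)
The plan is to split the proof into a structural step---showing that $\mathrm{Tr}^N_M((gE_{\lambda,N,\overline\chi})^\mu)$ already lies in $S_{k\mu}^{\rm new}(\varGamma_0(M))$---and an analytic step computing its coefficients in the orthogonal basis $\{f_1,\dots,f_d\}$ via Rankin--Selberg unfolding. As a preliminary, note that $gE_{\lambda,N,\overline\chi}\in S_k(\varGamma_0(N),\chi\overline\chi)=S_k(\varGamma_0(N))$ carries trivial character, so $(gE_{\lambda,N,\overline\chi})^\mu\in S_{k\mu}(\varGamma_0(N))$ and its trace to level $M$ lies in $S_{k\mu}(\varGamma_0(M))$.

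For the structural step I would use Proposition \ref{M.1} to decompose
\[
(gE_{\lambda,N,\overline\chi})^\mu=\sum_{\phi} F_\phi,
\]
where $\phi$ ranges over primitive forms of level $c_\phi\mid N$ appearing in a basis of Hecke eigenforms outside $N$ for $S_{k\mu}(\varGamma_0(N))$, and each $F_\phi$ lies in the span of $\{\phi\mid_{k\mu}B_t:t\mid N/c_\phi\}$. The hypothesis that $C$ is the conductor of $S_{k\mu}(\varGamma_0(N))$ forces $C\mid c_\phi$ whenever $F_\phi\neq 0$. Because $\mathrm{Tr}^N_M$ commutes with every Hecke operator $T(n)$ with $(n,N)=1$, any nonzero component of $\mathrm{Tr}^N_M F_\phi$ in a basis of Hecke eigenforms outside $M$ for $S_{k\mu}(\varGamma_0(M))$ must have, by the strong multiplicity one clause of Proposition \ref{M.1}, the same associated primitive form as $\phi$ itself, so $c_\phi\mid M$. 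Combined with $M\mid C\mid c_\phi$, this collapses to $c_\phi=M=C$, meaning $\phi$ is primitive at level $M$; hence every surviving $\mathrm{Tr}^N_M F_\phi$ is a scalar multiple of $\phi\in S_{k\mu}^{\rm new}(\varGamma_0(M))$.

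Granted newness, write $\mathrm{Tr}^N_M((gE_{\lambda,N,\overline\chi})^\mu)=\sum_i\alpha_i f_i$ and extract $\alpha_i=\langle\mathrm{Tr}^N_M((gE_{\lambda,N,\overline\chi})^\mu),f_i\rangle/\langle f_i,f_i\rangle$. A fundamental-domain computation---analogous to the unfolding step in the proof of Lemma \ref{rs}---yields the adjointness
\[
\langle\mathrm{Tr}^N_M F,f_i\rangle_M=[\varGamma_0(M):\varGamma_0(N)]\,\langle F,f_i\rangle_N.
\]
Applying Lemma \ref{rs} with $f=f_i$ (trivial character) and $g=g^\mu E_{\lambda,N,\overline\chi}^{\mu-1}$ (character $\chi^\mu\overline\chi^{\mu-1}=\chi$) at $s=k\mu-1$, the Eisenstein parameter $s+1-k\mu=0$ reduces that factor to $E_{\lambda,N,\overline\chi}$, and $f_i^\rho=f_i$ since primitive forms with trivial character have real Fourier coefficients; this expresses $\langle(gE_{\lambda,N,\overline\chi})^\mu,f_i\rangle_N$ in terms of $D(k\mu-1,f_i,g^\mu E_{\lambda,N,\overline\chi}^{\mu-1})$. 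Inserting $v(\varGamma_0(N)\backslash\mathfrak H)=(\pi/3)[\mathrm{SL}_2(\mathbf Z):\varGamma_0(N)]$ and $\Gamma(k\mu-1)=(k\mu-2)!$ consolidates the numerical constants into $c_M=3\cdot4^{-(k\mu-1)}(k\mu-2)!/[\mathrm{SL}_2(\mathbf Z):\varGamma_0(M)]$ as claimed.

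The main obstacle I anticipate is the structural step, in particular reconciling the Hecke data outside $N$ inherited from the trace with the Hecke decomposition outside $M$ on $S_{k\mu}(\varGamma_0(M))$, and leveraging both hypotheses $M\mid C$ and $\gcd(M,N/C)=1$ to pin down every surviving primitive-form component at level exactly $M$. Once newness is in hand, the analytic identification of each $\alpha_i$ via Lemma \ref{rs} is essentially indexing and bookkeeping.
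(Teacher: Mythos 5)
Your proposal is correct, and its analytic half coincides with the paper's: orthogonality of the $f_i$ (via self-adjointness of the Hecke operators and $f_i^\rho=f_i$), the unfolding identity $\langle\mathrm{Tr}^N_M F,f_i\rangle_M=[\varGamma_0(M):\varGamma_0(N)]\langle F,f_i\rangle_N$, and Lemma \ref{rs} at $s=k\mu-1$ with the Eisenstein factor degenerating to $E_{\lambda,N,\overline{\chi}}$, with the same consolidation of constants into $c_M$. The structural half, however, takes a genuinely different route. The paper never tracks which conductors survive: it verifies newness of $\mathrm{Tr}^N_M((gE_{\lambda,N,\overline{\chi}})^\mu)$ directly through the trace-theoretic criterion of Lemma \ref{L.4}, computing $\mathrm{Tr}^M_{M/q}\circ\mathrm{Tr}^N_M$ (and its $\omega_M$-twist) on each $g_i^\circ\mid_{k\mu}B_t$ and pushing the prime $q$ down to level $C_i/q$ via Lemma \ref{L.6}; this is precisely where the hypothesis $\gcd(M,N/C)=1$ enters, since it guarantees $\gcd(t,q)=1$. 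You instead squeeze the conductor from both sides: $C\mid c_\phi$ by definition of the conductor of the space, while a nonzero $\mathrm{Tr}^N_M F_\phi$ is an eigenform outside $N$ equivalent to $\phi$ lying in $S_{k\mu}(\varGamma_0(M))$, so by the uniqueness of the primitive form in an equivalence class and Proposition \ref{M.1} applied at level $M$ one gets $c_\phi\mid M$; hence $c_\phi=M$ and each surviving component is a scalar multiple of a level-$M$ primitive form, which is stronger than newness and apparently does not use $\gcd(M,N/C)=1$ at all. The price is that you lean on two standard facts that you should make explicit, as neither is contained in Proposition \ref{M.1} as stated: that $\mathrm{Tr}^N_M$ commutes with $T(n)$ for $\gcd(n,N)=1$, and that the translates $\phi\mid_{k\mu}B_t$, $t\mid N/c_\phi$, are linearly independent, so that an element of their span lying in $S_{k\mu}(\varGamma_0(M))$ is supported on $t\mid M/c_\phi$ and is therefore $0$ unless $c_\phi\mid M$. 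With those supplied, your argument is complete; the paper's route buys a proof that stays entirely within the two quoted lemmas of Li at the cost of the extra coprimality hypothesis.
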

\begin{proof}
We first show $\mathrm{Tr}^N_M((gE_{\lambda,N,\overline{\chi}})^\mu)\in S_{k\mu}^{\rm new}(\varGamma_0(M))$. For simplicity of description,
put $f=(gE_{\lambda,N,\overline{\chi}})^{\mu}\in S_{k\mu}(\varGamma_0(N))$. Then $f$ is expressed as $f=\sum_{i=1}^rc_ig_i$ for some $c_i\in\mathbf C$ by a basis $\{g_1,\dots,g_r\}$ of $S_{k\mu}(\varGamma_0(N))$ consisting of Hecke eigenforms outside $N$. For each $g_i$ with $1\le i\le r$, it follows from Proposition \ref{M.1} that there exist the conductor $C_i$ of $g_i$ and the equivalent primitive form $g_i^\circ\in S_{k\mu}^{\rm new}(\varGamma_0(C_i))$ such that $g_i=\sum_{1\le t\mid N/C_i}c'_t(g_i^\circ\mid_{k\mu} B_t)$ for some $c'_t\in\mathbf C$, and by the supposition $C=\gcd(C_1,\dots,C_r)$. Therefore, for any prime divisor $q$ of $M$,
\begin{eqnarray*}
{\rm Tr}^M_{M/q}\circ{\rm Tr}^N_M(g_i)\hspace{-2mm}&=&\hspace{-2mm}\sum_{1\le t\mid N/C_i}{\rm }c'_t{\rm Tr}^M_{M/q}\circ{\rm Tr}^N_M(g_i^\circ\mid_{k\mu}B_t)\\
&=&\hspace{-2mm}\sum_{1\le t\mid N/C_i}{\rm }c'_t[\varGamma_0(tC_i):\varGamma_0(N)]{\rm Tr}^M_{M/q}\circ{\rm Tr}^{tC_i}_M(g_i^\circ\mid_{k\mu}B_t)\\
&=&\hspace{-2mm}\sum_{1\le t\mid N/C_i}{\rm }c'_t[\varGamma_0(tC_i):\varGamma_0(N)]{\rm Tr}^{tC_i/q}_{M/q}\circ{\rm Tr}^{tC_i}_{tC_i/q}(g_i^\circ\mid_{k\mu}B_t)=0.\\
\end{eqnarray*}
Here the very last equality follows from Lemma \ref{L.4}, and Lemma \ref{L.6} by $\gcd(t,q)=1$ since $\gcd(M,N/C_i)$ is a divisor of $\gcd(M,N/C)=1$, namely
$$
{\rm Tr}^{tC_i}_{tC_i/q}(g_i^\circ\mid_{k\mu}B_t)=({\rm Tr}^{C_i}_{C_i/q}(g_i^\circ))\mid_{k\mu}B_t=0.
$$
Hence 
$$
{\rm Tr}^M_{M/q}\circ{\rm Tr}^N_M(f)=\sum_{i=1}^rc_i{\rm Tr}^M_{M/q}\circ{\rm Tr}^N_M(g_i)=0.
$$
Similarly the following calculation holds:
\begin{eqnarray*}
\lefteqn{{\rm Tr}^M_{M/q}\left({\rm Tr}^N_M(f)\mid_{k\mu}\omega_M\right)}\\
&=&\sum_{i=1}^rc_i\sum_{1\le t\mid N/C_i}c'_t{\rm Tr}^M_{M/q}\left\{{\rm Tr}^N_M(g_i^\circ\mid_{k\mu}B_t)\mid_{k\mu}\omega_M\right\}\\
&=&\sum_{i=1}^rc_i\sum_{1\le t\mid N/C_i}c'_t[\varGamma_0(tC_i):\varGamma_0(N)]{\rm Tr}^M_{M/q}\left\{{\rm Tr}^{tC_i}_M(g_i^\circ\mid_{k\mu}B_t)\mid_{k\mu}\omega_M\right\}\\
&=&\sum_{i=1}^rc_i\sum_{1\le t\mid N/C_i}c'_t[\varGamma_0(tC_i):\varGamma_0(N)]{\rm Tr}^{tC_i/q}_{M/q}\left\{{\rm Tr}^{tC_i}_{tC_i/q}(g_i^\circ\mid_{k\mu}B_t)\mid_{k\mu}\omega_M\right\}\\
&=&\sum_{i=1}^rc_i\sum_{1\le t\mid N/C_i}c'_t[\varGamma_0(tC_i):\varGamma_0(N)]{\rm Tr}^{tC_i/q}_{M/q}\left\{({\rm Tr}^{C_i}_{C_i/q}(g_i^\circ)\mid_{k\mu}B_t)\mid_{k\mu}\omega_M\right\}=0.
\end{eqnarray*}
Again by Lemma \ref{L.4}, we obtain $\mathrm{Tr}^N_M(f)\in S_{k\mu}^{\rm new}(\varGamma_0(M))$, that is,
\begin{equation}
\mathrm{Tr}^N_M\left((gE_{\lambda,N,\overline{\chi}})^\mu\right)=\sum_{i=1}^{d}c''_if_i,\ \ \textrm{for some}\ c''_i\in\mathbf C.\label{2}
\end{equation}
Secondly we express the above $c''_i$ in terms of the Petersson inner product. For any positive integer $n$ with $\gcd(n,N)=1$, the Hecke operator $T(n)$ acting on $S_{k\mu}(\varGamma_0(M))$ is a self-adjoint operator with respect to the inner product. Therefore, for any $j$ with $1\le j\le d$ we have
\begin{equation*}
c_n(f_i)\langle f_i,f_j\rangle=\langle f_i\mid_{k\mu}T(n),f_j\rangle=\langle f_i,f_j\mid_{k\mu}T(n)\rangle=\overline{c_n(f_j)}\langle f_i,f_j\rangle.
\end{equation*}
Here if $i=j$, then $c_n(f_j)=\overline{c_n(f_j)}$. As mentioned above, the $n$-th Fourier coefficient of $f_j^\rho=\overline{f_j(-\overline z)}$ is equal to $\overline{c_n(f_j)}$. Since $f_j$ is primitive at level $M$, it is obvious that $f_j^\rho$ belongs to $S_{k\mu}^{\rm new}(\varGamma_0(M))$ and $c_1(f_j^\rho)=1$. Hence we have $f_j=f_j^\rho$ by Proposition \ref{M.1}. 
If $i\neq j$, we conclude that $\langle f_i,f_j\rangle=0$ since $f_i\ne f_j$. As a result, by multiplying \eqref{2} by $f_j$, 
\begin{equation}
c''_i=\frac{\langle\mathrm{Tr}^N_M((gE_{\lambda,N,\overline{\chi}})^\mu),f_i\rangle}{\langle f_i,f_i\rangle}.\label{5}
\end{equation}
Finally we express $\langle\mathrm{Tr}^N_M((gE_{\lambda,N,\overline{\chi}})^\mu),f_i\rangle$ in terms of the Rankin-Selberg zeta function. Since $dv(z)$ is ${\rm GL}_2^+(\mathbf R)$-invariant, we have
\begin{eqnarray}
\lefteqn{\langle\mathrm{Tr}^N_M((gE_{\lambda,N,\overline{\chi}})^\mu),f_i\rangle}\nonumber\\
&=&v(\varGamma_0(M)\backslash\mathfrak H)^{-1}\int_{\varGamma_0(M)\backslash\mathfrak H}\mathrm{Tr}^N_M((gE_{\lambda,N,\overline{\chi}}(z))^\mu)\overline{f_i(z)}({\rm Im}(z))^{k\mu}dv(z)\nonumber\\
&=&v(\varGamma_0(M)\backslash\mathfrak H)^{-1}\hspace{-3mm}\sum_{\gamma\in\varGamma_0(N)\backslash\varGamma_0(M)}
\int_{\varGamma_0(M)\backslash\mathfrak H}((gE_{\lambda,N,\overline{\chi}}(z))^\mu\mid_{k\mu}\gamma)\overline{f_i(z)}({\rm Im}(z))^{k\mu}dv(z)\nonumber\\
&=&v(\varGamma_0(M)\backslash\mathfrak H)^{-1}\hspace{-3mm}\sum_{\gamma\in\varGamma_0(N)\backslash\varGamma_0(M)}
\int_{\varGamma_0(M)\backslash\mathfrak H}(gE_{\lambda,N,\overline{\chi}}(\gamma z))^\mu\overline{f_i(\gamma z)}({\rm Im}(\gamma\,z))^{k\mu}dv(\gamma z)\nonumber\\
&=&v(\varGamma_0(M)\backslash\mathfrak H)^{-1}\int_{\varGamma_0(N)\backslash\mathfrak H}(gE_{\lambda,N,\overline{\chi}}(z))^\mu\overline{f_i(z)}({\rm Im}(z))^{k\mu}dv(z).\nonumber
\end{eqnarray}
Here the very last equality is due to the disjoint partition
$$
\varGamma_0(N)\backslash\mathfrak H=\bigsqcup_{\gamma\in \varGamma_0(N)\backslash\varGamma_0(M)}\gamma\cdot(\varGamma_0(M)\backslash\mathfrak H),
$$
and replacing $\gamma\,z$ by $z$. On the other hand, by Lemma \ref{rs} for $s=k\mu-1$, $f_i\in S^{\rm new}_{k\mu}(\varGamma_0(M))\subset S_{k\mu}(\varGamma_0(N))$ and $g^\mu E_{\lambda,N,\overline{\chi}}^{\mu-1}\in S_{k\mu-\lambda}(\varGamma_0(N),\chi)$, we have
$$
(4\pi)^{-(k\mu-1)}\Gamma(k\mu-1)D(k\mu-1,f_i,g^\mu E_{\lambda,N,\overline{\chi}}^{\mu-1})=\int_{\varGamma_0(N)\backslash\mathfrak H}
\hspace{-7mm}
\overline{f_i(z)}(gE_{\lambda,N,\overline{\chi}}(z))^\mu({\rm Im}(z))^{k\mu}dv(z),
$$
where note that $f_i=f_i^\rho$. Consequently, it immediately follows that
\begin{eqnarray}
\lefteqn{\langle\mathrm{Tr}^N_M((gE_{\lambda,N,\overline{\chi}})^\mu),f_i\rangle}\label{4}\\
&=&v(\varGamma_0(M)\backslash\mathfrak H)^{-1}(4\pi)^{-(k\mu-1)}\Gamma(k\mu-1)D(k\mu-1,f_i,g^\mu E_{\lambda,N,\overline{\chi}}^{\mu-1}).\nonumber
\end{eqnarray}
Combining \eqref{4}, \eqref{5}, \eqref{2} and $v(\varGamma_0(M)\backslash\mathfrak H)=(\pi/3)[{\rm SL}_2(\mathbf Z):\varGamma_0(M)]$, 
we have the desired equation \eqref{0}, thereby completing the proof of Theorem \ref{gdhm}.
\end{proof}
\begin{rem}\label{3-1}
On the above proof, note that it was shown that $\mathbf Q_{f_i}$ is contained in $\mathbf R$ for any $i$ with $1\le i\le d$.
\end{rem}
\begin{rem}
The weight $k$ is substantially more than $4$ and even. The reason why is that $\lambda$ is more than $2$ and any modular form with the trivial character of odd weight for $\varGamma_0(N)$ is $0$ only.
\end{rem}

\section{Application}
In this section, we show a generalization of the formula \eqref{e0.1} as an application of Theorem \ref{gdhm}, furthermore by adding a certain assumption, derive the formula \eqref{e0.1} from the generalized formula. We first introduce required knowledge from theory of elliptic curves over number fields.

Let $K$ be an algebraic number field not algebraically closed and satisfying $K\cap\mathbf R=\mathbf Q$, and ${\rm E}_K$ an elliptic curve over $K$ defined by the Weierstrass equation 
$$
{\rm E}_K:y^2+a_1xy+a_3y=x^3+a_2x^2+a_4x+a_6,
$$
where the coefficients $a_1,\dots,a_6\in K$. We regard ${\rm E}_K$ as furnished with the point at infinity. By the change of variables
$$
X=x+\frac{a_1^2+4a_2}{12}\ \ {\rm and}\ \ Y=2y+a_1x+a_3,
$$
the corresponding Weierstrass canonical form is
$$
{\rm E}:Y^2=4X^3-g_2X-g_3.
$$
Here, there are relations among the coefficients of ${\rm E}_K$ and ${\rm E}$ as follows:
\begin{eqnarray*}
12g_2&=&(a_1^2+4a_2)^2-24(a_1a_3+2a_4),\\
216g_3&=&-(a_1^2+4a_2)^3+36(a_1^2+4a_2)(a_1a_3+2a_4)-216(a_2^3+4a_6).
\end{eqnarray*}
Note that $g_2$, $g_3\in K$ with ${g}_2^3-27{g}_3^2\neq0$, and these are uniquely determined by ${\rm E}_K$. Hence from the beginning, we may assume that ${\rm E}_K$ is given by a Weierstrass canonical form. Now the $J$-function is defined by
$$
J:{\rm SL}_2(\mathbf Z)\backslash\mathfrak H\longrightarrow\mathbf C\ \ {\rm as}\ \ J(z)=\frac{12^3g_2(z)^3}{g_2(z)^3-27g_3(z)^2},
$$
where 
\begin{eqnarray*}
g_2(z)=60\hspace{-5mm}\sum_{\scriptstyle (m,n)\in \mathbf Z^2 \atop\scriptstyle{(m,n)\neq(0,0)}}\hspace{-5mm}(mz+n)^{-4}\ \ {\rm and}\ \  g_3(z)=140\hspace{-5mm}\sum_{\scriptstyle (m,n)\in \mathbf Z^2 \atop\scriptstyle{(m,n)\neq(0,0)}}\hspace{-5mm}(mz+n)^{-6}.
\end{eqnarray*}
Let $L$ be a lattice of $\mathbf C$, namely $L=\mathbf Z\omega_1+\mathbf Z\omega_2$ with $\omega_1/\omega_2\in\mathfrak H$.
We put
\begin{eqnarray*}
g_2(L)=g_2(\omega_1,\omega_2)=\omega_2^{-4}g_2(\omega_1/\omega_2),\ \ \ \ g_3(L)=g_3(\omega_1,\omega_2)=\omega_2^{-6}g_3(\omega_1/\omega_2).
\end{eqnarray*}
Since the $J$-function is a bijection from ${\rm SL}_2(\mathbf Z)\backslash\mathfrak H$ to $\mathbf C$, for $g_2$, $g_3\in K$ with $g_2^3-27g_3^2\neq0$, there exists a lattice $L$ such that $g_2(L)=g_2$ and $g_3(L)=g_3$. In other words, for an elliptic curve ${\rm E}$ over $K$ defined by a Weierstrass canonical form, there exists a lattice $L$ such that ${\rm E}={\rm E}_L$, where
$$
{\rm E}_L=\left\{(x,y)\in{\mathbf C}^2\mid y^2=4x^3-g_2(L)x-g_3(L)\ \textrm{with}\ g_2(L)^3-27g_3(L)^2\ne0\right\}.
$$
Moreover the Weierstrass $\wp$-function related to $L$
$$
\wp(z;L)=z^{-2}+\sum_{\omega\in L\backslash\{0\}}\left\{(z-\omega)^{-2}-\omega^{-2}\right\}\ \ \textrm{for any}\ z\in\mathbf C,
$$
which induces the analytic group-isomorphism
$$
\mathbf C/L\longrightarrow {\rm E}_L\ \ {\rm by}\ \ z\longmapsto(\wp(z;L),\wp'(z;L)),
$$
where the base point $0$ corresponds to the point at infinity.
As a result, we see that for a given elliptic curve ${\rm E}_K$, there is a lattice $L=\mathbf Z\omega_1+\mathbf Z\omega_2$ as ${\rm E}_K\simeq\mathbf C/L$, and that 
$$
J(\omega_1/\omega_2)=\frac{12^3g_2(L)^3}{g_2(L)^3-27g_3(L)^2}=\frac{12^3{g_2}^3}{{g_2}^3-27{g_3}^2}\in K,
$$
that is, the $J$-invariant of ${\rm E}_K$ belongs to $K$.

Here we define the algebraic numeber field $K_N$, which is a subfield of the field over $K$ of $N$-division points of ${\rm E}_K$, by
$$
K_N=K(J(\omega_1/\omega_2),J(N\omega_1/\omega_2)).
$$
Especially when $N=1$, note that $K_1=K$.

For any positive integer $M$ dividing $N$, put a lattice $L_M$ containing $L$ as $L_M=\mathbf Z\omega_1+\mathbf Z\omega_2/M$. Then $L_M/L$ is a cyclic subgroup of $\mathbf C/L$ with order $M$. We denote by $S_M$ the embedding of $L_M/L$ into ${\rm E}_K$. 

Let us consider the specialization of modular forms for $\varGamma_0(M)$ at a pair $({\rm E}_K,S_M)$; for any $f\in G_k(\varGamma_0(M))$ with a positive integer $k$, the value
$$
(2\pi/\omega_2)^kf(\omega_1/\omega_2)
$$
is dependent on a pair $({\rm E}_K,S_M)$ only, that is, it is independent of the choice of bases $(\omega_1,\omega_2)$ and $(\omega_1,\omega_2/M)$ of $L$ and $L_M$. In fact, let $(\omega'_1,\omega'_2)$ and $(\omega'_1,\omega'_2/M)$ be another such bases. Then it immediately follows that ${}^t\!(\omega'_1,\omega'_2)=\gamma\ {}^t\!(\omega_1,\omega_2)$ for some $\gamma\in\varGamma_0(M)$. Therefore we conclude that
$$
(2\pi/\omega'_2)^kf(\omega'_1/\omega'_2)=(2\pi/\omega_2)^kf(\omega_1/\omega_2).
$$
\begin{rem}
If $({\rm E'}_K,S_M')$ is equivalent to $({\rm E}_K,S_M)$, then it holds that $\omega'_1/\omega'_2=\gamma\omega_1/\omega_2$ for some $\gamma\in\varGamma_0(M)$, where $(\omega'_1,\omega'_2)$ and $(\omega_1,\omega_2)$ are bases of the lattices corresponding ${\rm E'}_K$ and ${\rm E}_K$ respectively. Hence we have
$$
{\omega'_2}^k(2\pi/\omega'_2)^kf(\omega'_1/\omega'_2)=(c\omega_1+d\omega_2)^k(2\pi/\omega_2)^kf(\omega_1/\omega_2),
$$
where $(c,d)$ is the second row of $\gamma$. That is, the value $(2\pi/\omega_2)^kf(\omega_1/\omega_2)$ is not uniquely determined for the equivalence class of a pair $({\rm E}_K,S_M)$.
\end{rem} 
On the other hand, the transformation polynomial for $g\in G_{k}(\varGamma_0(N))$ to level $M$ is defined by
$$
\Phi^N_M(X;g)=\prod_{\gamma\in\varGamma_0(N)\backslash\varGamma_0(M)}(X-g\mid_k\gamma)=\sum_{i=0}^{\mu(M,N)}(-1)^{i}s_i(g)X^{\mu(M,N)-i},
$$
where $\mu(M,N)=[\varGamma_0(M):\varGamma_0(N)]$ and the coefficient $s_i(g)$ is the elementary symmetric polynomial of degree $i$ with respect to $\left\{g\mid_k\gamma\ ; \gamma\in\varGamma_0(N)\backslash\varGamma_0(M)\right\}$. It is easy to see that $s_i(g)$ belongs to $G_{ki}(\varGamma_0(M))$. The equation $\Phi^N_M(X;g)=0$ is so called the {\it transformation equation} for $g$ to level $M$. This notion is also defined for meromorphic modular forms. In particular, put $J_N(z)=J(Nz)$, then $\Phi^N_1(X;J_N(z))=0$ is classically called the {\it modular equation} of level $N$. By the reason mentioned above, we define the specialization of modular forms for $\varGamma_0(M)$ at $({\rm E}_K,S_M)$ as follows:
\begin{defn}
Let $k$, $M$ be positive integers with $M\mid N$. Suppose that ${\rm E}_K\simeq\mathbf C/L$ and $S_M\simeq L_M/L$ with $L=\mathbf Z\omega_1+\mathbf Z\omega_2$ and $L_M=\mathbf Z\omega_1+\mathbf Z\omega_2/M$. For any $f\in G_k(\varGamma_0(M))$ and $g\in G_k(\varGamma_0(N))$,
\begin{eqnarray*}
f(({\rm E}_K,S_M))&=&(2\pi/\omega_2)^kf(\omega_1/\omega_2),\\
\Phi^N_M(X;g,({\rm E}_K,S_M))&=&\sum_{i=0}^{\mu(M,N)}(-1)^{i}s_i(g)(({\rm E}_K,S_M))X^{\mu(M,N)-i}.
\end{eqnarray*}
Especially when $M=1$, we simply write $f({\rm E}_K)$ instead of $f(({\rm E}_K,S_1))$ since it depends on ${\rm E}_K$ only.
\end{defn}
Let us take the following modular group
$$
\varGamma_1(N)=\left\{\left.\begin{pmatrix}a&b\\c&d\end{pmatrix}\in\varGamma_0(N)\right|a\equiv d\equiv1\pmod N\right\}.
$$
Then $S_k(\varGamma_1(N))$ and $G_k(\varGamma_1(N))$ are expressed as follows:
$$
S_k(\varGamma_1(N))=\bigoplus_{\chi}S_k(\varGamma_0(N),\chi)\ \ {\rm and}\ \ G_k(\varGamma_1(N))=\bigoplus_{\chi}G_k(\varGamma_0(N),\chi),
$$
where $\chi$ runs over all Dirichlet characters modulo $N$. According to this fact, we see that a basis of $S_k(\varGamma_1(N))$ and $G_k(\varGamma_1(N))$ consists of the bases of all $S_k(\varGamma_0(N),\chi)$ and $G_k(\varGamma_0(N),\chi)$, respectively.
The following Proposition assures the existence of a basis of $S_k(\varGamma_1(N))$ consisting of members with rational Fourier coefficients at $\infty$.
\begin{prop}[cf. {\cite[Theorem 3.52.]{S1}}]\label{s1}
Let $\varGamma$ be a modular group such as $\varGamma_1(N)\subset\varGamma\subset\varGamma_0(N)$.
If $l\ge2$, then $S_{l}(\varGamma)$ has a basis consisting of cusp forms with Fourier coefficients at $\infty$ of rational integers.
\end{prop}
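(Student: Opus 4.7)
The plan is to proceed in three stages: reduction to the case $\varGamma = \varGamma_1(N)$, construction of a $\mathbf Q$-rational basis of $S_l(\varGamma_1(N))$ via Galois descent on Hecke eigenforms, and finally clearing denominators. For the reduction, note that $H := \varGamma/\varGamma_1(N)$ is a subgroup of $\varGamma_0(N)/\varGamma_1(N) \cong (\mathbf Z/N\mathbf Z)^{\times}$, and the decomposition displayed just before the statement refines to
\[
S_l(\varGamma) = \bigoplus_{\chi|_H = 1} S_l(\varGamma_0(N), \chi) \subset S_l(\varGamma_1(N)).
\]
If $S_l(\varGamma_1(N))$ admits an integral basis, then averaging each basis element over the diamond operators indexed by $H$ and clearing the common factor $|H|$ yields elements of $S_l(\varGamma)$ with integer Fourier coefficients whose $\mathbf C$-span is all of $S_l(\varGamma)$; a linearly independent subset supplies the required basis. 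So it suffices to treat $\varGamma = \varGamma_1(N)$.

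For the second stage, Proposition \ref{M.1} produces a basis of $S_l(\varGamma_1(N)) = \bigoplus_{\chi} S_l(\varGamma_0(N),\chi)$ consisting of normalized Hecke eigenforms outside $N$. Each such $f$ has Fourier coefficients in a number field $K_f$: the eigenvalues $c_p(f)$ at primes $p \nmid N$ are algebraic integers, and all other $c_n(f)$ are then determined by the Hecke recursion. This algebraic-integrality input is obtained by exhibiting a finite-rank, Hecke-stable $\mathbf Z$-lattice inside $S_l(\varGamma_1(N))$, classically via the Eichler--Shimura parabolic cohomology $H^1_{\mathrm{par}}(\varGamma_1(N), V)$ for a suitable $\varGamma_1(N)$-module $V$ of rank $l-1$ (the hypothesis $l \ge 2$ is essential here). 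Now $\mathrm{Aut}(\mathbf C/\mathbf Q)$ acts on the basis via $f \mapsto f^\sigma := \sum_{n} \sigma(c_n(f)) q^n$, sending $S_l(\varGamma_0(N),\chi)$ to $S_l(\varGamma_0(N),\chi^\sigma)$ and permuting the eigenforms. Linear independence of distinct embeddings of $K_f$ into $\mathbf C$ shows that each Galois orbit $\{f^\sigma \mid \sigma \in \mathrm{Gal}(K_f/\mathbf Q)\}$ spans a $\mathbf Q$-subspace of $\mathbf Q$-dimension equal to the orbit size; combining these over all orbits, the subspace $S_l(\varGamma_1(N),\mathbf Q) := \{f \in S_l(\varGamma_1(N)) \mid c_n(f) \in \mathbf Q \textrm{ for all } n\}$ has $\mathbf Q$-dimension equal to $\dim_{\mathbf C} S_l(\varGamma_1(N))$ and hence furnishes a $\mathbf Q$-basis.

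For the third stage, every element $h$ of this $\mathbf Q$-basis is a $\mathbf Q$-combination of forms $f^\sigma$ whose Fourier coefficients are algebraic integers, so all coefficients of $h$ lie in $D^{-1}\mathbf Z$ for a positive integer $D$ depending only on the denominators appearing in the combination; replacing $h$ by $Dh$ yields the desired integer-coefficient basis of $S_l(\varGamma_1(N))$, which then descends to $\varGamma$ via stage one. The main obstacle lies in the input at the start of stage two, namely exhibiting a Hecke-stable $\mathbf Z$-lattice of finite rank in $S_l(\varGamma_1(N))$: this requires non-trivial algebro-geometric or cohomological machinery (either Eichler--Shimura parabolic cohomology, or the $q$-expansion principle on the modular curve $X_1(N)$ over $\mathbf Q$ together with its $\mathbf Q$-rational Hecke correspondences), and both approaches use $l \ge 2$ in an essential way.
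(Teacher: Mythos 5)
The paper itself offers no proof of this proposition: it is imported verbatim from Shimura's book (Theorem 3.52 of \cite{S1}), so there is no internal argument to compare yours against. Judged on its own, your three-stage architecture --- reduce to $\varGamma_1(N)$, produce a $\mathbf Q$-rational spanning set by Galois descent from an eigenbasis with algebraic coefficients, then clear denominators --- is exactly the standard route, and it is essentially Shimura's: his proof also rests on exhibiting a Hecke-stable lattice of finite rank via parabolic cohomology. Your decision to leave that lattice as a named black box is defensible here, since the paper quotes the whole proposition as known; but you should be explicit that this black box is where all the content lives, and that nothing in the paper's toolkit (Lemmas \ref{L.6}, \ref{L.4}, Proposition \ref{M.1}) supplies it.

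There is, however, one genuine error in stage two. You claim that for a normalized Hecke eigenform $f$ outside $N$ the eigenvalues $c_p(f)$ at $p\nmid N$ determine ``all other $c_n(f)$ \ldots by the Hecke recursion.'' They do not: the recursion controls $c_{p^r}$ only for $p\nmid N$, and the coefficients at indices sharing a factor with $N$ are precisely what multiplicity one fails to pin down for oldforms. Indeed, by Proposition \ref{M.1} an eigenform outside $N$ is $\sum_{t\mid N/M} c_t\, f^{\circ}\mid_l B_t$ with \emph{arbitrary} $c_t\in\mathbf C$, so an eigenbasis outside $N$ can contain members with transcendental Fourier coefficients, and $\mathrm{Aut}(\mathbf C)$ need not permute such a basis at all. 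The repair is standard: replace your eigenbasis by the explicit basis $\{f^{\circ}\mid_l B_t\}$, where $f^{\circ}$ runs over primitive forms of levels $M$ with $c_\chi\mid M\mid N$ and $t\mid N/M$; these have algebraic coefficients (rational multiples of the $c_n(f^{\circ})$), the set is permuted by $\mathrm{Aut}(\mathbf C)$ compatibly with the $B_t$'s, and your descent argument then goes through. A smaller point in stage one: averaging over the diamond operators presupposes that they preserve integrality of Fourier coefficients, which is itself part of the rationality theory; it is cleaner to observe that $\bigoplus_{\chi\mid_H=1}S_l(\varGamma_0(N),\chi)$ is $\mathrm{Aut}(\mathbf C)$-stable inside $S_l(\varGamma_1(N))$ and hence inherits a $\mathbf Q$-structure directly.
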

Let $S_k(\varGamma_0(N),\chi;K)$ and $G_k(\varGamma_0(N),\chi;K)$ denote the $K$-linear subspace of $S_k(\varGamma_0(N),\chi)$ and $G_k(\varGamma_0(N),\chi)$ consisting of all elements with $K$-rational Fourier coefficients, respectively. For any even integer $l$ with $l\ge4$, we may take the basis $\{h_1,\dots,h_d\}$ of $G_{l}({\rm SL}_2(\mathbf Z);\mathbf Q)$ as follows:
let $(a,b)$ be a unique pair of non-negative integers satisfing $4a+6b=l-12(d-1)$. For any $j$ with $1\le j\le d$,
\begin{eqnarray}
h_j(z)&=&E_{4,1}(z)^aE_{6,1}(z)^{b+2(d-j)}\left\{(2\pi)^{-12}\Delta(z)\right\}^{j-1}\label{e.4.10}\\
&=&\sum_{i=0}^\infty c_i(h_j)q^i,\ \ q=\exp(2\pi{\bf i}z),\nonumber
\end{eqnarray}
where it is obvious by difinition that $c_i(h_j)\in\mathbf Q$ especially $c_i(h_j)=1$ or $0$ for $i=j-1$ or $i<j-1$, respectively, and $\Delta$ is the discriminant function
$$
\Delta(z)=g_2(z)^3-27g_3(z)^2=(2\pi)^{12}q\prod_{n=1}^{\infty}(1-q^n)^{24}.
$$ 
Then for ${\rm E}_K\simeq\mathbf C/L$ with $L=\mathbf Z\omega_1+\mathbf Z\omega_2$, the following equalities hold:
\begin{eqnarray*}
E_{4,1}(\omega_1/\omega_2)=12(2\pi/\omega_2)^{-4}g_2(L),\ \ E_{6,1}(\omega_1/\omega_2)=216(2\pi/\omega_2)^{-6}g_3(L),
\end{eqnarray*}
$$
(2\pi)^{-12}\Delta(\omega_1/\omega_2)=(2\pi/\omega_2)^{-12}\left\{g_2(L)^3-27g_3(L)^2\right\}.
$$
Therefore we have
$$
(2\pi/\omega_2)^lh_j(\omega_1/\omega_2)=\left\{12g_2(L)\right\}^a\hspace{-1mm}\left\{216g_3(L)\right\}^{b+2(d-j)}\hspace{-1mm}\left\{g_2(L)^3-27g_3(L)^2\right\}^{j-1}\hspace{-2mm}\in K.
$$
\begin{rem}\label{rem-1}
For any $f\in G_{l}(\varGamma_0(N);K)$, the value $(2\pi/\omega_2)^{l}f(\omega_1/\omega_2)$ belongs to $K_N$. In fact, we take any member $h_j$ of the basis \eqref{e.4.10} and put $\varphi_j=f/h_j\in K(J,J_N)$. Then, since $(2\pi/\omega_2)^{l}h_j(\omega_1/\omega_2)\in K$,
$$
(2\pi/\omega_2)^{l}f(\omega_1/\omega_2)=(2\pi/\omega_2)^{l}h_j(\omega_1/\omega_2)\varphi_j(\omega_1/\omega_2)\in K_N.
$$
\end{rem}
Let $P_k(\varGamma_0(N))$ denote the subset of $S_k^{\rm new}(\varGamma_0(N))$ consisting of all primitive forms and $\{f_1,\dots,f_d\}$ be a unique basis of $S_k^{\rm new}(\varGamma_0(N))$ consisting of primitive forms. Then it is easy to see that $P_k(\varGamma_0(N))=\{f_1,\dots,f_d\}$. 
We define ${\rm Aut}(\mathbf C)$-action on $G_k(\varGamma_0(N))$ by $f^\sigma$ which means that $\sigma\in{\rm Aut}(\mathbf C)$ acts on all the Fourier coefficients of $f\in G_k(\varGamma_0(N))$. Then $P_k(\varGamma_0(N))$ is stable under this action. The following result is concerned with the algebraicity of the special values of the Rankin-Selberg zeta function.
\begin{prop}[cf. {\cite[Theorem 3]{S2}}]\label{s}
Let $k$, $l$ be positive integers with $k>l$ and $f$ a primitive form of $S_k(\varGamma_1(N))$, $g$ an element of $G_l(\varGamma_1(N))$. For any integer $m$ with $2^{-1}(k+l-2)<m<k$,
\begin{equation*}
the\ value\ \frac{D(m,f,g)}{\pi^k\langle f,f\rangle}\ belongs\ to\ \mathbf Q_f\mathbf Q_g.
\end{equation*}
Moreover, for every $\sigma\in{\rm Aut}(\mathbf C)$, we have
\begin{equation*}
\left\{\frac{D(m,f,g)}{\pi^k\langle f,f\rangle}\right\}^\sigma=\frac{D(m,f^\sigma,g^\sigma)}{\pi^k\langle f^\sigma,f^\sigma\rangle}.
\end{equation*}
\end{prop}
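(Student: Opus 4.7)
The plan is to combine the Rankin--Selberg integral representation of Lemma \ref{rs} with Shimura's machinery of nearly holomorphic modular forms and holomorphic projection. First I would set $s=m$ in Lemma \ref{rs} to obtain
\[
(4\pi)^{-m}\Gamma(m)D(m,f,g)=\int_{\varGamma_0(N)\backslash\mathfrak H}\overline{f^{\rho}(z)}\,g(z)\,E_{k-l,N}(z,m+1-k,\overline{\chi\psi})\,y^{m+1}\,dv(z),
\]
where $\chi$, $\psi$ are the nebentypus characters of $f$, $g$. The range $2^{-1}(k+l-2)<m<k$ is exactly what makes $s':=m+1-k$ lie in (or on the convergence boundary of) the halfplane needed for $E_{k-l,N}(z,s',\overline{\chi\psi})$ to represent, via analytic continuation, a nearly holomorphic modular form of weight $k-l$. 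A central arithmetic input is that, normalized by an appropriate explicit power of $\pi$, the Fourier coefficients of this Eisenstein series lie in $\mathbf Q(\chi\psi)$ and transform equivariantly under $\mathrm{Aut}(\mathbf C)$.

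Next I would view the integrand as a Petersson pairing at weight $k$ between $f^{\rho}$ and the nearly holomorphic form $g(z)\cdot E_{k-l,N}(z,m+1-k,\overline{\chi\psi})\cdot y^{m+1-k}$. Applying Shimura's holomorphic projection then yields a \emph{holomorphic} cusp form $H\in S_k(\varGamma_1(N))$ such that $\langle H,f^{\rho}\rangle$ equals the above integral (up to standard explicit factors), and whose Fourier coefficients, after dividing by the right power of $\pi$, lie in $\mathbf Q_g\,\mathbf Q(\chi\psi)$. The key point is that holomorphic projection preserves algebraicity of Fourier coefficients and commutes with the $\mathrm{Aut}(\mathbf C)$-action.

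I would then decompose $H$ in a basis of $S_k(\varGamma_1(N))$ of Hecke eigenforms outside $N$. Since $f$ is primitive, Proposition \ref{M.1} together with self-adjointness of the Hecke operators forces $\langle f,f_j\rangle=0$ for every eigenform $f_j$ inequivalent to $f$, so the ratio $\langle H,f^{\rho}\rangle/\langle f,f\rangle$ reduces to the $f$-isotypic coefficient of $H$. Shimura's arithmeticity theorem for the $f$-component of an arithmetic cusp form then places this coefficient in $\mathbf Q_f\mathbf Q_g\mathbf Q(\chi\psi)=\mathbf Q_f\mathbf Q_g$ (the characters being already encoded in $f$ and $g$), and careful bookkeeping of the $\pi$-powers coming from $\Gamma(m)$, the Eisenstein normalization, and the measure yields the stated membership $D(m,f,g)/\pi^k\langle f,f\rangle\in\mathbf Q_f\mathbf Q_g$.

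Finally, for the Galois equivariance, each ingredient of the chain is $\sigma$-equivariant in an essentially formal way: the normalized Eisenstein series transforms through $\overline{\chi\psi}\mapsto\overline{\chi^{\sigma}\psi^{\sigma}}$, holomorphic projection commutes with $\sigma$, and the isotypic projector onto $f$ is intertwined with the isotypic projector onto $f^{\sigma}$ by $\sigma$. Assembling these gives $(D(m,f,g)/\pi^k\langle f,f\rangle)^{\sigma}=D(m,f^{\sigma},g^{\sigma})/\pi^k\langle f^{\sigma},f^{\sigma}\rangle$. The main obstacle I anticipate is the rigorous justification of the holomorphic projection in the precise range $(k+l-2)/2<m<k$ (including the boundary case $2m=k+l$ where one must invoke analytic continuation rather than the absolutely convergent series) and the simultaneous tracking of arithmeticity through that projection; this is the technical heart of Shimura's approach and is where his theory of canonical periods of primitive forms is essentially used.
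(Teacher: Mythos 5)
The paper gives no proof of this proposition: it is quoted directly from Shimura {\cite[Theorem 3]{S2}}, so there is no internal argument to compare against. Your outline --- the Rankin--Selberg integral at $s=m$ from Lemma \ref{rs}, the nearly holomorphic Eisenstein series $E_{k-l,N}(z,m+1-k,\overline{\chi\psi})$ in the critical range, holomorphic projection to a cusp form with controlled Fourier coefficients, isotypic projection onto the primitive form via self-adjointness of the Hecke operators, and the formal $\mathrm{Aut}(\mathbf C)$-equivariance of each step --- is a faithful reconstruction of Shimura's actual proof, and the two technical points you flag (analytic continuation at the boundary $2m=k+l$ and preserving arithmeticity through the projection) are precisely where the substantive work lies in his paper.
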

A generalization of the formula \eqref{e0.1} is now stated as follows:
\begin{cor}\label{gdhm-c}
Let $k$, $l$, $\lambda$, $\mu$ and $M$ be taken as Theorem \ref{gdhm}, and furthermore $\{f_{(1)},\dots,f_{(w)}\}$ a complete set of representatives for ${\rm Aut}(\mathbf C)$-orbits of $P_{k\mu}(\varGamma_0(M))$. Suppose that $E_{\lambda,N,\overline{\chi}}$ has all the Fourier coefficients in $K$, $g\in S_l(\varGamma_0(N), \chi ;K)$ and ${\rm E}_K\simeq\mathbf C/L$ with $L=\mathbf Z\omega_1+\mathbf Z\omega_2$ satisfying the following two conditions:
\begin{enumerate}
\item[{\rm (A)}]$\Phi^N_M(X;gE_{\lambda,N,\overline{\chi}},({\rm E}_K,S_M))\in K_M[X]$ is irreducible over $K_M$.
\item[{\rm (B)}]$K_M\cap\mathbf Q_{f(i)}=\mathbf Q$ for any $i$ with $1\le i\le w$.
\end{enumerate}
Then we have
\begin{eqnarray*}
&&{\rm Tr}_{K_N/K_M}\left\{(2\pi/\omega_2)^kgE_{\lambda,N,\overline{\chi}}(\omega_1/\omega_2)\right\}^\mu\\
&=&c_M\sum_{i=1}^{w}{\rm Tr}_{\mathbf Q_{f_{(i)}}K_M/K_M}\left\{\frac{D(k\mu-1,f_{(i)},g^{\mu}E_{\lambda,N,\overline{\chi}}^{\mu-1})}{\pi^{k\mu}\langle f_{(i)},f_{(i)}\rangle}(2\pi/\omega_2)^{k\mu}f_{(i)}(\omega_1/\omega_2)\right\}.
\end{eqnarray*}
\end{cor}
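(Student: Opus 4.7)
The plan is to apply Theorem \ref{gdhm} and specialize the equation \eqref{0} at the pair $({\rm E}_K,S_M)$ by multiplying both sides by $(2\pi/\omega_2)^{k\mu}$ and evaluating at $\omega_1/\omega_2$. Since $\mathrm{Tr}^N_M((gE_{\lambda,N,\overline{\chi}})^\mu)\in S^{\rm new}_{k\mu}(\varGamma_0(M))$ and each $f_i$ is a primitive form of level $M$, specialization at $({\rm E}_K,S_M)$ is well-defined on both sides via the formalism developed before the corollary. The identity $(gE_{\lambda,N,\overline{\chi}})^\mu\mid_{k\mu}\gamma=(gE_{\lambda,N,\overline{\chi}}\mid_k\gamma)^\mu$ lets me rewrite the specialized left-hand side as the sum of $\mu$-th powers of the roots of $\Phi^N_M(X;gE_{\lambda,N,\overline{\chi}},({\rm E}_K,S_M))$.

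For the left-hand side, the value $\alpha=(2\pi/\omega_2)^k gE_{\lambda,N,\overline{\chi}}(\omega_1/\omega_2)$ (coming from $\gamma=I$) lies in $K_N$ by Remark \ref{rem-1}. Under assumption (A) the irreducibility of $\Phi^N_M(X;gE_{\lambda,N,\overline{\chi}},({\rm E}_K,S_M))\in K_M[X]$ forces $K_N=K_M(\alpha)$ and $[K_N:K_M]=\mu(M,N)$, so the roots of the polynomial are exactly the $\mathrm{Gal}(K_N/K_M)$-conjugates of $\alpha$. Summing their $\mu$-th powers gives ${\rm Tr}_{K_N/K_M}(\alpha^\mu)$, which is the desired left-hand side of the corollary.

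For the right-hand side, I partition the basis $\{f_1,\dots,f_d\}$ into the ${\rm Aut}(\mathbf{C})$-orbits of $\{f_{(1)},\dots,f_{(w)}\}$, so the sum becomes $\sum_{i=1}^{w}\sum_{f\in\mathrm{orbit}(f_{(i)})}$. The key compatibility, for any $\sigma\in{\rm Aut}(\mathbf{C})$ fixing $K_M$, is
\[
\sigma\!\left(\frac{D(k\mu-1,f_{(i)},g^\mu E_{\lambda,N,\overline{\chi}}^{\mu-1})}{\pi^{k\mu}\langle f_{(i)},f_{(i)}\rangle}(2\pi/\omega_2)^{k\mu}f_{(i)}(\omega_1/\omega_2)\right)=\frac{D(k\mu-1,f_{(i)}^\sigma,g^\mu E_{\lambda,N,\overline{\chi}}^{\mu-1})}{\pi^{k\mu}\langle f_{(i)}^\sigma,f_{(i)}^\sigma\rangle}(2\pi/\omega_2)^{k\mu}f_{(i)}^\sigma(\omega_1/\omega_2).
\]
The Rankin--Selberg factor transforms correctly by Proposition \ref{s}, using that $g^\mu E_{\lambda,N,\overline{\chi}}^{\mu-1}$ has Fourier coefficients in $K\subseteq K_M$; the specialized modular form value transforms correctly by writing $f_{(i)}=h\varphi$ with $h\in G_{k\mu}({\rm SL}_2(\mathbf{Z});\mathbf{Q})$ from \eqref{e.4.10} and $\varphi\in\mathbf{Q}_{f_{(i)}}(J,J_M)$, as in Remark \ref{rem-1}, which converts the action on Fourier coefficients into the action on the specialized value. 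Assumption (B) then yields a bijection between the elements of the orbit of $f_{(i)}$ and the embeddings of $\mathbf{Q}_{f_{(i)}}K_M$ into $\overline{K_M}$ fixing $K_M$, and the orbit sum reassembles as ${\rm Tr}_{\mathbf{Q}_{f_{(i)}}K_M/K_M}$ of the bracketed expression; summing over $i=1,\dots,w$ gives the right-hand side of the corollary.

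The principal obstacle I expect is the Galois-theoretic bookkeeping in the orbit step: verifying that the ${\rm Aut}(\mathbf{C})$-orbit of a primitive form $f_{(i)}$ is in genuine natural bijection, rather than with multiplicity, with the $\mathrm{Gal}(\mathbf{Q}_{f_{(i)}}K_M/K_M)$-orbit of the specialized coefficient--requiring both assumption (B) to match the degrees and a careful argument that the value $(2\pi/\omega_2)^{k\mu}f_{(i)}(\omega_1/\omega_2)$ really sits in $\mathbf{Q}_{f_{(i)}}K_M$ with the correct transformation law. Once this compatibility is established, the corollary follows by equating the two sides of the specialized form of \eqref{0}.
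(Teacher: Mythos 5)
Your proposal is correct and follows essentially the same route as the paper: specialize equation \eqref{0} from Theorem \ref{gdhm} at $({\rm E}_K,S_M)$, use (A) to identify the specialized left-hand side with ${\rm Tr}_{K_N/K_M}$ of the value at $\gamma=I$, and use (B) together with Proposition \ref{s} to reassemble each ${\rm Aut}(\mathbf C)$-orbit sum on the right as ${\rm Tr}_{\mathbf Q_{f_{(i)}}K_M/K_M}$. The only divergence is in how the ${\rm Aut}(\mathbf C/K_M)$-equivariance of the specialized values $f_{(i)}^{\sigma}(({\rm E}_K,S_M))$ is established: the paper expands each conjugate in a rational basis of $S_{k\mu}^{\rm new}(\varGamma_0(M))$ supplied by Proposition \ref{s1} and inverts the rational, invertible matrix of Fourier coefficients, whereas you factor $f_{(i)}=h\varphi$ with $\varphi\in\mathbf Q_{f_{(i)}}(J,J_M)$ as in Remark \ref{rem-1}; both mechanisms work.
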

\begin{proof}
Put $P_{k\mu}(\varGamma_0(M))=\{f_1,\dots,f_d\}$. From Theorem \ref{gdhm}, we have
\begin{eqnarray}
\mathrm{Tr}^N_M((gE_{\lambda,N,\overline{\chi}})^\mu)&=&c_M\sum_{n=1}^{d}\frac{D(k\mu-1,f_n,g^\mu E_{\lambda,N,\overline{\chi}}^{\mu-1})}{\pi^{k\mu}\langle f_n,f_n\rangle}f_n\label{4-2}\\
&=&c_M\sum_{i=1}^{w}\sum_{j=1}^{d/w}\frac{D(k\mu-1,{f_{(i)}}^{\sigma_{i,j}},g^\mu E_{\lambda,N,\overline{\chi}}^{\mu-1})}{\pi^{k\mu}\langle {f_{(i)}}^{\sigma_{i,j}},{f_{(i)}}^{\sigma_{i,j}}\rangle}{f_{(i)}}^{\sigma_{i,j}},\nonumber
\end{eqnarray}
where $f_n={f_{(i(n))}}^{\sigma_{i(n),j(n)}}$ for some $\sigma_{i,j}\in{\rm Aut}(\mathbf C/\mathbf Q_{f(i)})\backslash{\rm Aut}(\mathbf C)$, namely 
$$
\{f_1,\dots,f_d\}=\bigsqcup_{i=1}^{w}\left\{{f_{(i)}}^{\sigma_{i,1}},\dots,{f_{(i)}}^{\sigma_{i,d/w}}\right\},
$$
where let $\sigma_{i,1}$ be the identity map. Then, since 
$$
{\rm Aut}(\mathbf C)={\rm Aut}(\mathbf C/\mathbf Q_{f(i)}\cap K_M)={\rm Aut}(\mathbf C/\mathbf Q_{f(i)}){\rm Aut}(\mathbf C/K_M)\ \ {\rm by\ (B)},
$$
we may choose all representatives of ${\rm Aut}(\mathbf C/\mathbf Q_{f(i)})\backslash{\rm Aut}(\mathbf C)$ from ${\rm Aut}(\mathbf C/K_M)$. By Proposition \ref{s1}, there is a basis $\{p_1,\dots,p_d\}$ of $S_{k\mu}^{\rm new}(\varGamma_0(M))$ consisting of members with rational Fourier coefficients. Then ${f_{(i)}}^{\sigma_{i,j}}=\sum_{s=1}^\infty c_{s}({f_{(i)}}^{\sigma_{i,j}})q^{s}$ is expressed by this basis:
$$
{f_{(i)}}^{\sigma_{i,j}}=\sum_{s=1}^\infty c_{s}({f_{(i)}}^{\sigma_{i,j}})q^{s}=\sum_{{t}=1}^{d}a_tp_{t}=\sum_{t=1}^{d}\sum_{s=1}^\infty a_tc_{s}(p_{t})q^{s}\ \ \text{for some}\ a_{t}\in\mathbf C,
$$
where $c_{s}(p_{t})\in\mathbf Q$ for all $s$, $t$. By comparing the coefficient of $q^{s}$, we have $c_{s}({f_{(i)}}^{\sigma_{i,j}})=\sum_{{t}=1}^{d}a_tc_{s}(p_{t})$, and by arranging $c_{1}({f_{(i)}}^{\sigma_{i,j}}),\dots,c_{d}({f_{(i)}}^{\sigma_{i,j}})$ to the column vector, the following linear equality is obtained:
$$
\begin{pmatrix}c_1({f_{(i)}}^{\sigma_{i,j}})\\\vdots\\c_d({f_{(i)}}^{\sigma_{i,j}})\end{pmatrix}=
\begin{pmatrix}c_1(p_1)&\dots&c_1(p_d)\\\vdots&\ddots&\vdots\\c_d(p_1)&\dots&c_d(p_d)\end{pmatrix}
\begin{pmatrix}a_1\\\vdots\\a_d\end{pmatrix}.
$$
Since $p_1$,\dots,$p_d$ are $\mathbf C$-linear independent, the above matrix $(c_s(p_t))$ belongs to ${\rm GL}_d(\mathbf Q)$.
Therefore we have $a_t\in\mathbf Q_{{f_{(i)}}^{\sigma_{i,j}}}$ for any $t$ with $1\le t\le d$, and by Remark \ref{rem-1} we conclude that 
$$
{f_{(i)}}^{\sigma_{i,j}}(({\rm E}_K,S_M))=\sum_{t=1}^d{a_t}p_t(({\rm E}_K,S_M))\in\mathbf Q_{{f_{(i)}}^{\sigma_{i,j}}}K_M.
$$
Moreover since $a_t$ is a $\mathbf Q$-linear combination with respect to $\left\{c_s({f_{(i)}}^{\sigma_{i,j}})\right\}_{s=1}^d$, for any $\tau\in{\rm Aut}(\mathbf C/K_M)$,
\begin{eqnarray*}
{f_{(i)}}^{\sigma_{i,j}}(({\rm E}_K,S_M))^\tau=\sum_{t=1}^d{a_t}^\tau p_t(({\rm E}_K,S_M))={f_{(i)}}^{{\sigma_{i,j}}\tau}(({\rm E}_K,S_M)).
\end{eqnarray*}
Then we specialize the equation \eqref{4-2} at $({\rm E}_K,S_M)$:
\begin{eqnarray*}
\lefteqn{\mathrm{Tr}^N_M((gE_{\lambda,N,\overline{\chi}})^\mu)({\rm E}_K,S_M)}\\
&=&c_M\sum_{i=1}^{w}\sum_{j=1}^{d/w}\frac{D(k\mu-1,{f_{(i)}}^{\sigma_{i,j}},g^\mu E_{\lambda,N,\overline{\chi}}^{\mu-1})}{\pi^{k\mu}\langle {f_{(i)}}^{\sigma_{i,j}},{f_{(i)}}^{\sigma_{i,j}}\rangle}{f_{(i)}}^{\sigma_{i,j}}(({\rm E}_K,S_M))
=c_M\sum_{i=1}^{w}\sum_{j=1}^{d/w}\xi_{{f_{(i)}}^{\sigma_{i,j}}},\\
&&\textrm{where\ put}\ \ \xi_{{f_{(i)}}^{\sigma_{i,j}}}=\frac{D(k\mu-1,{f_{(i)}}^{\sigma_{i,j}},g^\mu E_{\lambda,N,\overline{\chi}}^{\mu-1})}{\pi^{k\mu}\langle {f_{(i)}}^{\sigma_{i,j}},{f_{(i)}}^{\sigma_{i,j}}\rangle}{f_{(i)}}^{\sigma_{i,j}}(({\rm E}_K,S_M)).
\end{eqnarray*}
Since $g^\mu E_{\lambda,N,\overline{\chi}}^{\mu-1}$ has all the Fourier coefficients in $K$, by Proposition \ref{s} we have $\xi_{{f_{(i)}}^{\sigma_{i,j}}}\in\mathbf Q_{{f_{(i)}}^{\sigma_{i,j}}}K_M$ and for any $\tau\in{\rm Aut}(\mathbf C/K_M)$,
\begin{equation*}
{\xi_{{f_{(i)}}^{\sigma_{i,j}}}}^\tau=\xi_{{f_{(i)}}^{\sigma_{i,j}\tau}}\in\{\xi_{{f_{(i)}}^{\sigma_{i,j}}}\}_{j=1}^{d/w}.
\end{equation*}
Therefore we eventually have
\begin{equation*}
\{\xi_{{f_{(i)}}^\tau}\ ;\ \tau\in{{\rm Aut}(\mathbf C/K_M)}\}=\{\xi_{{f_{(i)}}^{\sigma_{i,j}}}\}_{j=1}^{d/w},\ \ {\rm that\ is,}
\end{equation*}
$\sum_{j=1}^{d/w}\xi_{{f_{(i)}}^{\sigma_{i,j}}}=\mathrm{Tr}_{\mathbf Q_{f_{(i)}}K_M/K_M}(\xi_{f_{(i)}})$. 
On the other hand, by definition,
$$
\mathrm{Tr}^N_M((gE_{\lambda,N,\overline{\chi}})^\mu)({\rm E}_K,S_M)=\sum_{\gamma\in\varGamma_0(N)\backslash\varGamma_0(M)}\left\{\left.\left(2\pi/\omega_2\right)^kgE_{\lambda,N,\overline{\chi}}\right\}^\mu\right|_{k\mu}\gamma\left(\omega_1/\omega_2\right).
$$
Here $\left\{(2\pi/\omega_2)^kgE_{\lambda,N,\overline{\chi}}(\omega_1/\omega_2)\right\}^\mu$ belongs to $K_N$ by Remark \ref{rem-1}, and moreover by (A), all $K_M$-isomorphisms of $K_N$ into $\mathbf C$ are given by all elements of $\varGamma_0(N)\backslash\varGamma_0(M)$. Hence we have
$$
\hspace{-3mm}\sum_{\gamma\in\varGamma_0(N)\backslash\varGamma_0(M)}\hspace{-7mm}\left\{\left.\left(2\pi/\omega_2\right)^kgE_{\lambda,N,\overline{\chi}}\right\}^\mu\right|_{k\mu}\hspace{-2mm}\gamma\left(\omega_1/\omega_2\right)=\mathrm{Tr}_{K_N/K_M}\hspace{-1mm}\left\{(2\pi/\omega_2)^kgE_{\lambda,N,\overline{\chi}}(\omega_1/\omega_2)\right\}^\mu.
$$
This proves our Corollary.
\end{proof}
\begin{rem}
On the above proof, it immediately follows that $\{\xi_{{f_{(i)}}^{\sigma_{i,j}}}\}_{j=1}^{d/w}$ is stable under ${\rm Aut}(\mathbf C/K_M)$.
Therefore $F(X)=\prod_{j=1}^{d/w}(X-\xi_{{f_{(i)}}^{\sigma_{i,j}}})$ belongs to $K_M[X]$. Moreover, since $\xi_{{f_{(i)}}^{\sigma_{i,j}}}\neq\xi_{{f_{(i)}}^{\sigma_{i,{j'}}}}$ for any $j\neq j'$, we conclude that $F(X)$ is irreducible over $K_M$, namely $\mathbf Q_{f_{(i)}}K_M=K_M(\xi_{f_{(i)}})$ and the dimension of $\mathbf Q_{f_{(i)}}K_M$ over $K_M$ is $d/w$.
\end{rem}
\begin{rem}\label{rem-2}
If the algebraic number field $K$ is algebraically closed, then (A) does not hold, namely $\Phi^N_M(X;gE_{\lambda,N,\overline{\chi}},({\rm E}_K,S_M))\in K_M[X]$ is reducible over $K_M$ for any $g$, ${\rm E}_K$. Furthermore if $K\cap\mathbf R\neq\mathbf Q$, then (B) generally does not hold by Remark \ref{3-1}. For these reasons, we supposed that $K$ is not algebraically closed and satisfies $K\cap\mathbf R=\mathbf Q$. Meanwhile, we see that (A), (B) hold whenever take $g$, ${\rm E}_K:y^2=4x^3-g_2x-g_3$ such as $g\in S_l(\varGamma_0(M),\chi;K)\subset S_l(\varGamma_0(N),\chi;K)$, $g_2g_3\neq0$ and $J(M\omega_1/\omega_2)\in\mathbf Q_{f(i)}$ for any $i$ with $1\le i\le w$.
\end{rem}
We specialize to $M=1$, $K=\mathbf Q$ and $\chi$ is trivial. Let us assume the following, which is concerned with the number of ${\rm Aut}(\mathbf C)$-orbits of $P_k({\rm SL}_2(\mathbf Z))$:
\begin{conj}[Maeda's conjecture]\label{M}
There is only one ${\rm Aut}(\mathbf C)$-orbit of $P_{k}({\rm SL}_2(\mathbf Z))$ for all integers $k$ with $12\le k$.
\end{conj}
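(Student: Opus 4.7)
The statement is the weak form of Maeda's conjecture on level-one newforms, a well-known open problem. The natural plan is to reformulate it as an irreducibility question over $\mathbf Q$ and attack that. Concretely, the ${\rm Aut}(\mathbf C)$-orbits of $P_k({\rm SL}_2(\mathbf Z))$ correspond bijectively to the irreducible factors over $\mathbf Q$ of the characteristic polynomial $P_{k,p}(X)$ of the Hecke operator $T(p)$ acting on $S_k({\rm SL}_2(\mathbf Z))$, once $p$ is chosen so that the eigenvalues $\{a_p(f)\}$ for $f \in P_k({\rm SL}_2(\mathbf Z))$ are pairwise distinct (which holds for all but finitely many $p$ by strong multiplicity one). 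Hence the conjecture reduces to producing, for every even $k \ge 12$, a prime $p$ for which $P_{k,p}(X) \in \mathbf Z[X]$ is irreducible.

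First I would make $P_{k,p}(X)$ explicit via the Eichler--Selberg trace formula, which gives closed expressions for ${\rm tr}(T(p^i) \mid S_k({\rm SL}_2(\mathbf Z)))$ in terms of class numbers and elementary arithmetic data; Newton's identities then recover the coefficients of $P_{k,p}$. Second, I would attempt to verify irreducibility by $\ell$-adic criteria for a carefully chosen auxiliary prime $\ell$: the cleanest such tool is the Newton polygon, for if the Newton polygon of $P_{k,p}$ at $\ell$ consists of a single segment of slope $a/b$ with $\gcd(a,b) = 1$, then irreducibility follows. A complementary line of attack uses the mod-$\ell$ Galois representations $\bar\rho_{f,\ell}$ attached to newforms by Deligne: locating an $\ell$ at which the residual representations of distinct newforms of weight $k$ are all conjugate under ${\rm Gal}(\overline{\mathbf Q}/\mathbf Q)$ would force those newforms into a common ${\rm Aut}(\mathbf C)$-orbit.

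The principal obstacle, and the reason this statement has resisted proof despite extensive numerical verification (Farmer--James, Buzzard, Ghitza--McAndrew, and others, reaching weights in the thousands), is the absence of any uniform mechanism guaranteeing irreducibility of $P_{k,p}$ simultaneously for all $k$: the trace-formula expressions do not appear to carry enough structure to support an induction on $k$, and the Galois-representation side requires control of congruences that is presently out of reach. Realistically I would expect this approach to succeed only for individual weights, or for thin subfamilies (for example, weights $k$ with $\dim S_k({\rm SL}_2(\mathbf Z))$ prime, where the admissible factorizations of $P_{k,p}$ are severely restricted), while a complete proof of the conjecture appears well beyond current techniques. This is precisely why the statement is invoked here as an assumption rather than as a theorem.
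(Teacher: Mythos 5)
You have correctly recognized that this statement is Maeda's conjecture, an open problem, and the paper likewise offers no proof: it is stated as a \emph{Conjecture}, used only as a hypothesis in Corollary \ref{dhm-c}, and supported solely by a citation to computational verification for small weights (Farmer--James). Your reformulation in terms of the irreducibility over $\mathbf Q$ of the characteristic polynomial of a Hecke operator $T(p)$ on $S_k({\rm SL}_2(\mathbf Z))$ is exactly the form in which that numerical evidence is obtained, so your assessment matches the paper's treatment; there is nothing to prove here and no gap to report, beyond noting that your sketched attack strategies (Newton polygons at auxiliary primes, residual Galois representations) are, as you yourself say, not expected to yield the full statement.
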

At least, for not so large $k$, it is verified by calculations that this assertion holds true (cf. e.g. \cite{FJ}). 
Now on Corollary \ref{gdhm-c}, we have the formula \eqref{e0.1} by putting $w=1$ as follows:
\begin{cor}\label{dhm-c}
Let $k$, $l$, $\lambda$ and $\mu$ be taken as Theorem \ref{dhm}. Suppose that $g\in S_l(\varGamma_0(N);\mathbf Q)$ and ${\rm E}_{\mathbf Q}\simeq\mathbf C/L$ with $L=\mathbf Z\omega_1+\mathbf Z\omega_2$ satisfying the following condition:
\begin{enumerate}
\item[{\rm (A)}]$\Phi^N_1(X;gE_{\lambda,N},{\rm E}_{\mathbf Q})\in\mathbf Q[X]$ is irreducible over $\mathbf Q$.
\end{enumerate} 
If Maeda's Conjecture holds true, then we have
\begin{eqnarray*}
\lefteqn{{\rm Tr}_{\mathbf Q_N/\mathbf Q}\left\{(2\pi/\omega_2)^kgE_{\lambda,N}(\omega_1/\omega_2)\right\}^\mu}\\
&=&c_1{\rm Tr}_{\mathbf Q_f/\mathbf Q}\left\{\frac{D(k\mu-1,f,g^{\mu}E_{\lambda,N}^{\mu-1})}{\pi^{k\mu}\langle f,f\rangle}(2\pi/\omega_2)^{k\mu}f(\omega_1/\omega_2)\right\},
\end{eqnarray*}
where $f$ is any element of $P_{k\mu}({\rm SL}_2(\mathbf Z))$.
\end{cor}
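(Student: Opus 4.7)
The plan is to deduce Corollary \ref{dhm-c} as the specialization of Corollary \ref{gdhm-c} to the case $M = 1$, $K = \mathbf{Q}$, and trivial $\chi$, invoking Maeda's conjecture at the final step to collapse the outer sum to a single term. Under these choices one has $K_1 = \mathbf{Q}$, $K_N = \mathbf{Q}_N$, and $E_{\lambda, N, \overline{\chi}} = E_{\lambda, N}$; moreover $L_1/L$ is trivial, so $S_1$ is the trivial subgroup and $f(({\rm E}_{\mathbf{Q}}, S_1)) = f({\rm E}_{\mathbf{Q}})$. Hence both sides of the generalized formula degenerate term-by-term into those of Corollary \ref{dhm-c}.

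First I would verify that the hypotheses of Corollary \ref{gdhm-c} are satisfied. The divisibility requirements $M \mid C$ and $\gcd(M, N/C) = 1$ inherited from Theorem \ref{gdhm} hold trivially for $M = 1$. The Eisenstein series $E_{\lambda, N}$ has rational Fourier coefficients, and $g \in S_l(\varGamma_0(N); \mathbf{Q})$ is precisely the statement $g \in S_l(\varGamma_0(N), \chi; K)$ for trivial $\chi$ and $K = \mathbf{Q}$. Condition (A) of Corollary \ref{gdhm-c} specializes verbatim to condition (A) of Corollary \ref{dhm-c} once we recognize $({\rm E}_K, S_1) = {\rm E}_{\mathbf{Q}}$, and condition (B) reduces to $\mathbf{Q} \cap \mathbf{Q}_{f_{(i)}} = \mathbf{Q}$, which is automatic.

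Next I would apply Corollary \ref{gdhm-c} directly, after noting that the leading constant evaluates as
\[
c_1 = 3 \cdot 4^{-(k\mu - 1)}(k\mu - 2)!\,[{\rm SL}_2(\mathbf{Z}) : \varGamma_0(1)]^{-1} = 3 \cdot 4^{-(k\mu - 1)}(k\mu - 2)!,
\]
since $\varGamma_0(1) = {\rm SL}_2(\mathbf{Z})$, in agreement with the constant in the statement. Finally, Maeda's conjecture at weight $k\mu$ asserts that $P_{k\mu}({\rm SL}_2(\mathbf{Z}))$ consists of exactly one ${\rm Aut}(\mathbf{C})$-orbit, so $w = 1$ and the outer sum collapses to a single term indexed by any representative $f \in P_{k\mu}({\rm SL}_2(\mathbf{Z}))$. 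Since $\mathbf{Q}_{f_{(1)}} K_M = \mathbf{Q}_{f}\mathbf{Q} = \mathbf{Q}_{f}$, the resulting identity is precisely the claimed one.

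There is no genuine obstacle in this argument: the substantive work has already been done in Theorem \ref{gdhm}, Corollary \ref{gdhm-c}, and Proposition \ref{s}. The only delicate point is the bookkeeping that guarantees every datum of the general formulation degenerates correctly to its level-one, trivial-character counterpart, together with the observation that hypothesis (B) trivializes precisely because $K_1 = \mathbf{Q}$.
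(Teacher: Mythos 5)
Your proposal is correct and is essentially the paper's own argument: the paper derives Corollary \ref{dhm-c} precisely by specializing Corollary \ref{gdhm-c} to $M=1$, $K=\mathbf Q$, trivial $\chi$, and putting $w=1$ via Maeda's conjecture. Your additional bookkeeping (checking the divisibility hypotheses, the triviality of condition (B), and the evaluation of $c_1$) only makes explicit what the paper leaves implicit.
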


%\newpage

\begin{acknow} 
The author would like to thank Hisa-aki Kawamura,
%, Department of Mathematics, 
for suggesting this subject and making a comment on early drafts.
\end{acknow}

%\newpage


\begin{thebibliography}{9999999}
% \bibitem[caption]{label}Author(s), \textit{Book}, Series, Volume, Publisher, Year. 
% \bibitem[caption]{label}Author(s), \textit{article}, Journal (Year), Volume, Number, pp.???--???. 
\bibitem[DHM]{DHM} K. Doi, H. Hida and Y. Maeda, 
\textit{Transformation equations and the special values of Shimura's zeta functions}, 
Hokkaido Math. J., {\bf 13}, (1984), pp.347-361.
\bibitem[FJ02]{FJ} D. W. Farmer and K. James, \textit{The irreducibility of some level 1 Hecke polynomials}, Math. Comp., {\bf 71}, (2001), no. 239, pp.1263-1270.
\bibitem[Li75]{L} W.C. Winnie Li, \textit{Newforms and functional equations}, Math. Ann. {\bf 212} (1975), pp.285-315.
\bibitem[Mi06]{M}T.\,Miyake, \textit{Modular forms} (Translated from the 1976 Japanese original by Yoshitaka Maeda, 
Reprint of the first 1989 English edition), Springer Monographs in Math., Springer-Verlag, Berlin, 2006.
\bibitem[Sh76]{S2} G. Shimura, \textit{The special values of zeta functions associated with cusp forms}, 
Comm. Pure Appl. Math., {\bf 29}, (1976), pp.783-804.
\bibitem[Sh94]{S1} G. Shimura, \textit{Introduction to the arithmetic theory of automorphic functions} 
(Reprint of the 1971 original), Public. Math. Soc. Japan, {\bf 11}, (Kan\^{o} Memorial Lectures 1), Princeton Univ. Press, 1994.

\end{thebibliography}
\end{document}